\newtheorem{thm}{Theorem}[section]
\newtheorem{prop}{Proposition}[section]
\date{}
\title{On a continued fraction expansion of the special function $e^{x}E_{1}(x)$ and an explicit expression of the continued fraction convergents}
\author{Naoki Murabayashi \and Hayato Yoshida}
\begin{document}
 \maketitle
\begin{abstract}
 In [1], there are given two representations of the function $G_{1}(x)$ (equal to the exponential integral $E_{1}(x)$) that appears in an expression of the first derivative of the $L$-function of an elliptic curves defined over $\mathbb{Q}$ at 1. One is the Puiseux Series, and the other is the continued fraction representation. In [3] we can see how to construct formally this continued fraction from $F(x):=e^{x}E_{1}(x)$ (explained briefly in Introduction) but we have never seen a proof of that it converges to the original function $F(x)$. More precisely an asymptotic expansion of $F(x)=e^{x}E_{1}(x)$ is $\displaystyle\sum_{k=1}^{\infty} (-1)^{k-1}(k-1)!\left( \displaystyle\frac{1}{x}\right)^{k}$ and this gives the continued fraction by quotient-difference algorithm, which is briefly announced by E. Stiefel and developed by H. Rutishauser.\\
 \ \ \ \ In this paper we define ``a continued fraction expansion of $F(x)$ at infinity'', which is analogous to the regular continued fraction expansion of real numbers, and prove that this expansion gives the same continued fraction. Moreover, we give concrete representations of rational functions which are obtained by truncating the continued fraction.
\end{abstract}
\section*{Keywords}
Continued fractions, Asymptotic expansion, Quatient-difference algorithm, Reccurence relation, Linear fractional transformations.
\section*{Declarations}
\textbf{Funding} The authors did not receive support from any organization for the submitted work.\\ \\
\textbf{Conflicts of interest/Competing interests} The authors have no conflicts of interest to declare that are relevant to the content of this article.\\ \\
\textbf{Availability of data and material} Not applicable.\\ \\
\textbf{Code availability} Not applicable.\\ \\
\textbf{Authors' contrinutions} All authors contributed to the study conception and design.\\ \\
\textbf{Ethics approval} Not applicable.\\ \\
\textbf{Consent to participate} Not applicable.\\ \\
\textbf{Consent for publication} Not applicable.
\newpage
\section{Introduction}
\ \ \ \ Let $\displaystyle\sum_{n=1}^{\infty} a_{n}n^{-s}$ be the Dirichlet series of the Hasse-Weil $L$ functions $L (E, s)$ of an elliptic curve $E$ over $\mathbb{Q}$. Putting $$\Lambda(s):=\left(\frac{\sqrt{N}}{2\pi}\right)^{s}\Gamma(s)L(E,s),$$ it has the following integral representation:$$\int_0^\infty f_{E}\left(\frac{iy}{\sqrt{N}}\right)y^{s-1}dy,$$
where $N$ is the conductor of $E$ and $f_{E}(\tau)=\displaystyle\sum_{n=1}^{\infty} a_{n}e^{2\pi in\tau} (\tau\in\mathbb{C}, \rm{Im}(\tau)>0)$ is a cusp form of weight 2 for $\Gamma_{0}(N)$ according to the Taniyama-Shimura conjecture, which has been proved by C. Breuil, B. Conrad, F. Diamond, R. Taylor, and A. Wiles. Therefore we obtain that
$$\Lambda(s)=\int_1^\infty f_{E}\left(\frac{iy}{\sqrt{N}}\right)(y^{s-1}-\varepsilon y^{1-s})dy,$$
where $\varepsilon \in \{\pm 1\}$ is determined by the action of the main involution $W_{N}$ on $f_{E}$, i.e., $f_{E}|_{[W_{N}]_{2}}=\varepsilon f_{E}$. The $r$ th derivative of $\Lambda(s)$ at $s=1$ is given by
$$\Lambda^{(r)}(1)=2\int_1^\infty f_{E}\left(\frac{iy}{\sqrt{N}}\right)(\log y)^{r}dy=2\sum_{n=1}^{\infty} a_{n}\int_1^\infty e^{-2\pi ny/\sqrt{N}}(\log y)^{r}dy.$$
Let $r$ be the order of vanishing of $L(E,s)$ at $s=1$. We have that $$\Lambda^{(r)}(1)=\frac{\sqrt{N}}{2\pi}L^{(r)}(E,1),$$
by substituting $s=1$ into the $r$ th derivative of $\Lambda(s)=\left(\frac{\sqrt{N}}{2\pi}\right)^{s}\Gamma(s)L(E,s)$.
Therefore we obtain that
\begin{eqnarray*}
L^{(r)}(E, 1)&=&\frac{2\pi}{\sqrt{N}}\Lambda^{(r)}(1)\\
&=&\frac{4\pi}{\sqrt{N}}\sum_{n=1}^{\infty} a_{n}\int_1^\infty e^{-2\pi ny/\sqrt{N}}(\log y)^{r}dy.
\end{eqnarray*}
Using an integration by parts in $\displaystyle\int_1^\infty e^{-2\pi ny/\sqrt{N}}(\log y)^{r}dy$, we have that
$$L^{(r)}(E, 1)=2r\sum_{n=1}^{\infty} \frac{a_{n}}{n}\int_1^\infty e^{-2\pi ny/\sqrt{N}} (\log y)^{r-1}\frac{dy}{y}.$$
\ \ \ \ The function $G_{r}(x)$ is defined by $$G_{r}(x):=\frac{1}{(r-1)!}\int_1^\infty e^{-xy}(\log y)^{r-1}\frac{dy}{y}\ \ (r \ge 1).$$
Then we have that
$$L^{(r)}(E, 1)=2r!\sum_{n=1}^{\infty} \frac{a_{n}}{n}G_{r}\left(\frac{2\pi n}{\sqrt{N}}\right).$$ 
In [1], two representation of the $G_{1}(x)=\displaystyle\int_1^\infty e^{-xy}\frac{dy}{y}=\int_{x}^{\infty} \displaystyle\frac{e^{-t}}{t}dt=E_{1}(x)$ is given: 
\begin{equation}
\log \frac{1}{x}-\gamma+\sum_{n=1}^{\infty} \frac{(-1)^{n-1}}{n\cdot n!}x^{n}\ \ (\gamma: \rm{Euler's\ constant});
\end{equation}
\begin{equation}
\cfrac{e^{-x}}{x+\cfrac{1}{1+\cfrac{1}{x+\cfrac{2}{1+\cfrac{2}{x+\cfrac{3}{1+\cdots}}}}}}
\end{equation}
for the purpose of calculating an approximation of $L^{(1)}(E_{0},1)$, where $E_{0}:y^{2}=4x^{3}-28x+25$. 
However, a proof of these representations is not explained at all. \\
\ \ \ \ Now we explain a proof of (1.1). We put
$$H(x):=\log \frac{1}{x}+\sum_{n=1}^{\infty} \frac{(-1)^{n-1}}{n\cdot n!}x^{n}.$$
The first derivative of $H(x)$ is given by
\begin{eqnarray*}
H^{'}(x)&=&-\frac{1}{x}+\sum_{n=1}^{\infty} \frac{(-1)^{n-1}}{n!}x^{n-1} \\
&=& -\frac{1}{x}-\frac{1}{x}\sum_{n=1}^{\infty} \frac{(-x)^{n}}{n!}\\
&=&  -\frac{1}{x}-\frac{1}{x}(e^{-x}-1) \\
&=& -\frac{e^{-x}}{x}.
\end{eqnarray*}
On the other hand the first derivative of $G_{1}(x)$ is given by
\begin{eqnarray*}
G^{'}_{1}(x)&=& \int_1^\infty e^{-xy}(-y)\frac{dy}{y} \\
&=& -\int_1^\infty e^{-xy}dy \\
&=& -\frac{e^{-x}}{x}.
\end{eqnarray*}
Therefore we have that 
$$G_{1}(x)=H(x)+C,$$
where $C$ is a constant. By calculating $G_{1}(1)-H(1)$, 
we obtain $C=-\gamma$, therefore (1.1). \\
\ \ \ \ Next we explain an outline of a proof of (1.2). 
Let $\{c_{n}\}_{n=0}^{\infty}$ be a sequence of real numbers. The following quantity is called the Hankel determinant: for $k\ge 2$,
$$H_{k}^{(n)}:=\left|\begin{array}{cccc}
      c_{n} & c_{n+1} & \cdots &  c_{n+k-1} \\
      c_{n+1} & c_{n+2} & \cdots & c_{n+k} \\
      \vdots & \vdots & \ddots & \vdots \\
      c_{n+k-1} & c_{n+k} & \cdots & c_{n+2k-2}
    \end{array}
  \right|;$$
$H_{0}^{(n)}:=1$; $H_{1}^{(n)}:=c_{n}$. Let $\{e^{(n)}_{k}\}_{n,k\ge 0}$ and $\{q^{(n)}_{k}\}_{n,k\ge 0}$ be two sequence in $\mathbb{R}$ determined by the following recurrence relations:
$$e_{k+1}^{(n)}=q_{k}^{(n+1)}-q_{k}^{(n)}+e_{k}^{(n+1)},\ \ \ \ q_{k+1}^{(n)}=\frac{e_{k+1}^{(n+1)}}{e_{k+1}^{(n)}}\cdot q_{k}^{(n+1)}\ \ \ \ (n,k\ge 0)$$
with initial conditions: $e_{0}^{(n)}=0$, $q_{0}^{(n)}=\displaystyle\frac{c_{n+1}}{c_{n}}$\ \ $(n\ge 0)$. Recursively constructing $e_{k}^{(n)}$ and $q_{k}^{(n)}$ from the sequence $\{c_{n}\}_{n=0}^{\infty}$ is called the quotient-difference algorithm. With respect to the connection of $\{e_{k}^{(n)},\ q_{k}^{(n)}\}$ and Hankel determinants, the following is well known:
\begin{thm}
For any integer $k\ge 1$, it holds that
$$e_{k}^{(n)}=\frac{H_{k+1}^{(n)}H_{k-1}^{(n+1)}}{H_{k}^{(n+1)}H_{k}^{(n)}},\ \ \ \ q_{k-1}^{(n)}=\frac{H_{k}^{(n+1)}H_{k-1}^{(n)}}{H_{k}^{(n)}H_{k-1}^{(n+1)}}.$$
\end{thm}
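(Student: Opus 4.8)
The plan is to prove both identities simultaneously by induction on $k$, using as the single nontrivial input the Jacobi (Desnanot--Jacobi / Dodgson condensation) identity for Hankel determinants. First I would establish that identity: applying the Desnanot--Jacobi relation to the $(m{+}1)\times(m{+}1)$ Hankel matrix $\bigl(c_{n+i+j-2}\bigr)_{1\le i,j\le m+1}$ and recognizing each of its five minors as a Hankel determinant — the central minor is $H_{m-1}^{(n+2)}$, the two principal minors obtained by deleting a corner row and column are $H_m^{(n+2)}$ and $H_m^{(n)}$, and the two off-diagonal minors both reduce, after a one-step index shift, to $H_m^{(n+1)}$ — yields
$$H_{m+1}^{(n)}H_{m-1}^{(n+2)}=H_m^{(n)}H_m^{(n+2)}-\bigl(H_m^{(n+1)}\bigr)^{2}\qquad(J_m).$$
I would assume throughout that the relevant Hankel determinants do not vanish, which is exactly the regularity condition under which the quotient--difference quantities are well defined and the divisions below are legitimate.

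For the induction it is convenient to name the two assertions: let $P_k$ denote the claimed formula for $e_k^{(n)}$, and let $Q_k$ denote the formula $q_k^{(n)}=H_{k+1}^{(n+1)}H_k^{(n)}\big/\bigl(H_{k+1}^{(n)}H_k^{(n+1)}\bigr)$, which is the stated identity for $q_{k-1}$ with its index raised by one; each is asserted for all $n\ge 0$. The base case $Q_0$ is immediate, since $H_0\equiv 1$ and $H_1^{(n)}=c_n$ make the right-hand side collapse to $c_{n+1}/c_n=q_0^{(n)}$, and $P_1$ follows from the evaluation $H_2^{(n)}=c_nc_{n+2}-c_{n+1}^{2}$ together with $e_1^{(n)}=q_0^{(n+1)}-q_0^{(n)}$.

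The inductive step runs in two stages from the hypothesis $\{P_k,\,Q_{k-1}\}$. First I would derive $Q_k$ from the multiplicative recurrence $q_k^{(n)}=\bigl(e_k^{(n+1)}/e_k^{(n)}\bigr)\,q_{k-1}^{(n+1)}$: substituting the $P_k$ and $Q_{k-1}$ expressions and cancelling common Hankel factors, the quotient telescopes directly to the $Q_k$ form, with no determinant identity needed — this stage is pure bookkeeping. Then I would derive $P_{k+1}$ from the additive recurrence $e_{k+1}^{(n)}=q_k^{(n+1)}-q_k^{(n)}+e_k^{(n+1)}$, inserting the now-available $Q_k$ and the hypothesis $P_k$, and clearing denominators over a common product of Hankel determinants.

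I expect the reduction in this last stage to be the main obstacle. After clearing denominators one is left with a three-term polynomial identity in the Hankel determinants; substituting $(J_{k+1})$ to rewrite the factor $H_{k+2}^{(n)}H_k^{(n+2)}$ appearing on the target side cancels one term exactly, and the identity that then remains is precisely the Jacobi relation $(J_k)$. Thus $P_{k+1}$ is forced by $(J_m)$ at the two consecutive levels $m=k$ and $m=k+1$, while everything else is cancellation. Keeping the numerous index shifts consistent and choosing the grouping of the three terms that exposes these two instances of $(J_m)$ is the delicate point; once it is carried out, the pair $\{P_{k+1},\,Q_k\}$ matches the inductive hypothesis at the next level, closing the induction and proving both formulas for all $k\ge 1$.
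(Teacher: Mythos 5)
Your argument is correct, but there is nothing in the paper to compare it with: Theorem 1.1 is quoted as ``well known'' and never proved (the intended reference is presumably Henrici's account of the quotient--difference algorithm, item [4] of the bibliography), so your proposal supplies a proof where the paper supplies none. The route you choose is the standard one and every step checks out. The base cases $Q_0$ and $P_1$ are as you say; the passage from $\{P_k,\,Q_{k-1}\}$ to $Q_k$ through the multiplicative recurrence is indeed pure cancellation; and the additive step does reduce to exactly the two instances of the condensation identity you name. Concretely, writing $a=H_k^{(n)}$, $b=H_k^{(n+1)}$, $c=H_k^{(n+2)}$, $A=H_{k+1}^{(n)}$, $B=H_{k+1}^{(n+1)}$, $C=H_{k+1}^{(n+2)}$, $\alpha=H_{k-1}^{(n+2)}$, $X=H_{k+2}^{(n)}$, the right-hand side of $e_{k+1}^{(n)}=q_k^{(n+1)}-q_k^{(n)}+e_k^{(n+1)}$ becomes $(ACb^2-B^2ac+AB^2\alpha)/(ABbc)$; the relation $(J_k)$, i.e.\ $A\alpha=ac-b^2$, collapses the numerator to $b^2(AC-B^2)$, and $(J_{k+1})$, i.e.\ $Xc=AC-B^2$, turns the quotient into $Xb/(AB)$, which is precisely $P_{k+1}$. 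Your identification of the five minors in the Desnanot--Jacobi identity for the Hankel matrix is also right, and the two off-diagonal minors are equal to $H_m^{(n+1)}$ with no sign correction because the Hankel structure makes them literal Hankel matrices of the shifted sequence. The only hypothesis you invoke, the non-vanishing of the relevant $H_k^{(n)}$, is exactly the regularity condition the paper itself imposes, so nothing is missing.
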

Let $f(x)$ be a function defined on an open interval in $\mathbb{R}$. A formal power series $\displaystyle\sum_{n=0}^{\infty} c_{n}x^{-n}$ with respect to $x^{-1}$ is called an asymptotic expansion of $f(x)$ when for any integer $n\ge 1$, there exist positive constants $M_{n}$ such that 
$$\left|f(x)-\displaystyle\sum_{m=0}^{n} c_{m}x^{-m}\right|\le M_{n}|x|^{-(n+1)}.$$  Putting $c_{n}=0$ for $n<0$, we extend $H_{k}^{(n)}$ for $n<0$ and $k\ge1$. We assume that if $n+k>0$, $H_{k}^{(n)}\neq 0$. Then it is well known that to every formal power series $\displaystyle\sum_{n=0}^{\infty} c_{n}x^{-n}$
there corresponds the continued fraction of the form
$$\cfrac{d_{0}}{1+\cfrac{d_{1}}{x+\cfrac{d_{2}}{1+\cfrac{d_{3}}{x+\cfrac{d_{4}}{1+\cfrac{d_{5}}{\cdots}}}}}},$$
with $d_{0}=c_{0}$, $d_{2k-1}=-\displaystyle\frac{H_{k}^{(1)}H_{k-1}^{(0)}}{H_{k}^{(0)}H_{k-1}^{(1)}}=-q_{k-1}^{(0)}$, $d_{2k}=-\displaystyle\frac{H_{k+1}^{(0)}H_{k-1}^{(1)}}{H_{k}^{(1)}H_{k}^{(0)}}=-e_{k}^{(0)}$ such that rational functions obtained by truncating this continued fraction are Pad\'e approximants of  $\displaystyle\sum_{n=0}^{\infty} c_{n}x^{-n}$.\\
\ \ \ \ The formal power series $$\displaystyle\sum_{k=1}^{\infty} (-1)^{k-1}(k-1)!\left( \displaystyle\frac{1}{x}\right)^{k}=\frac{1}{x}\displaystyle\sum_{k=0}^{\infty} (-1)^{k}k!\left( \displaystyle\frac{1}{x}\right)^{k}\eqno(1.3)$$ is an asymptotic expansion of $F(x)=e^{x}E_{1}(x)$. This is proved in Proposition 2.1. We set $c_{n}:=(-1)^{n}n!$. By the quotient-difference algorithm, we have that for $k\ge 1$
$$q_{k-1}^{(n)}=-(n+k),\ \ \ \ e_{k}^{(n)}=-k.$$
Therefore we obtain that
$$d_{2k-1}=-q_{k-1}^{(0)}=k,\ \ \ \ d_{2k}=-e_{k}^{(0)}=k.$$
 Then we obtain the following continued fraction which is equivalent to (1.2):
$$\frac{1}{x}\cdot \cfrac{d_{0}}{1+\cfrac{d_{1}}{x+\cfrac{d_{2}}{1+\cfrac{d_{3}}{x+\cfrac{d_{4}}{1+\cfrac{d_{5}}{x+\cdots}}}}}}=\cfrac{1}{x+\cfrac{1}{1+\cfrac{1}{x+\cfrac{2}{1+\cfrac{2}{x+\cfrac{3}{1+\cdots}}}}}}.$$
By making each numerator one, this continued fraction is equal to
$$\cfrac{1}{m_{1}+\cfrac{1}{m_{2}+\cfrac{1}{m_{3}+\cfrac{1}{m_{4}+\cdots}}}}$$
where
$$m_{n}=\begin{cases}
x\ \ \ \ \ \mbox{if}\ n:\mbox{odd}, \vspace{3mm}\\
\displaystyle\frac{2}{n}\ \ \ \ \mbox{if}\ n:\mbox{even}.
\end{cases}$$
This continued fraction is also given by Johann Georg von Soldner. In [3] it is proved that this continued fraction converges uniformly on any compact set of the complex plane without the subset of the negative real numbers, i.e., $\mathbb{C}-\mathbb{R}_{<0}$. Therefore this defines a holomorphic function $\widetilde{F}(x)$ on $\mathbb{C}-\mathbb{R}_{<0}$. In [3] it is stated without proof that $\widetilde{F}(x)=F(x)$. However, this is not trivial for us because the radius of convergence of (1.3) is zero so there is no relationship between $F(x)$ and (1.3) as a function. In the forthcoming paper, we will show $\widetilde{F}(x)=F(x)$ by following the proof of that the regular continued fraction expansion of a real number $\alpha$ converges to $\alpha$. \\
\ \ \ \ In this paper we define ``a continued fraction expansion of $F(x)$ at infinity'', which is analogous to the regular continued fraction expansion of real numbers, and prove that this expansion gives the same continued fraction. Moreover we give an explicit expression of rational functions obtained by truncating the continued fraction of $F(x)$, i.e., putting
$$\displaystyle\frac{P_{n}(x)}{Q_{n}(x)}:=\cfrac{1}{m_{1}+\cfrac{1}{m_{2}+\cfrac{1}{m_{3}+\cfrac{1}{\ddots \cfrac{\ddots}{m_{n-2}+\displaystyle\frac{1}{m_{n-1}+\displaystyle\frac{1}{m_{n}}}}}}}}, \eqno(1.4)$$
where $P_{n}(x),\ Q_{n}(x)\in \mathbb{Q}[x]$, we calculate $P_{n}(x),\ Q_{n}(x)$.
\section{A continued fraction expansion of $F(x)$ at infinity}
\ \ \ \ We put $F(x):=e^{x}E_{1}(x)=\displaystyle\int_1^\infty e^{-x(y-1)}\frac{dy}{y}$. Then, we have the following.
\begin{prop}
Let $\mathcal{O}$ be the Landau symbol. Then
$$F(x)-\left(\sum_{k=1}^{n} (-1)^{k-1}(k-1)!\left(\frac{1}{x}\right)^{k}\right)=\mathcal{O}\left(\frac{1}{x^{n+1}}\right)\ \ \ \ (x\to \infty).$$
\end{prop}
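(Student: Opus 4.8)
The plan is to reduce $F(x)$ to a Laplace-type integral and then expand the integrand by a finite geometric series carrying an explicit remainder. First I would substitute $t=y-1$ in $F(x)=\int_1^\infty e^{-x(y-1)}\,dy/y$ to obtain the cleaner form
$$F(x)=\int_0^\infty e^{-xt}\,\frac{dt}{1+t}.$$
This moves the singularity of the weight $1/(1+t)$ to $t=-1$, away from the domain of integration, and makes the large-$x$ behaviour governed entirely by the behaviour of the weight near $t=0$.

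Next I would insert the finite geometric identity
$$\frac{1}{1+t}=\sum_{k=0}^{n-1}(-1)^{k}t^{k}+\frac{(-1)^{n}t^{n}}{1+t}\qquad(t\ge 0)$$
into the integral, splitting $F(x)$ into a polynomial part and a remainder. For the polynomial part I would use the elementary Gamma-integral $\int_0^\infty e^{-xt}t^{k}\,dt=k!/x^{k+1}$ (valid for $x>0$), so that the term of index $k$ contributes $(-1)^{k}k!/x^{k+1}$. Reindexing by $j=k+1$ turns this sum into exactly $\sum_{j=1}^{n}(-1)^{j-1}(j-1)!\,(1/x)^{j}$, which is precisely the claimed partial sum.

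It then remains to control the remainder
$$R_{n}(x):=(-1)^{n}\int_0^\infty e^{-xt}\,\frac{t^{n}}{1+t}\,dt.$$
Since $0<1/(1+t)\le 1$ for all $t\ge 0$, I would bound it directly by $|R_{n}(x)|\le\int_0^\infty e^{-xt}t^{n}\,dt=n!/x^{n+1}$, which yields $R_{n}(x)=\mathcal{O}(1/x^{n+1})$ with the explicit uniform constant $M_{n}=n!$. This is exactly the asserted estimate, completing the argument.

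I expect no serious obstacle here: the only points requiring genuine care are the legitimacy of the substitution and the fact that every manipulation is an honest equality of absolutely convergent integrals for $x>0$ (so no interchange of limits must be justified), together with the bookkeeping of the index shift and the sign $(-1)^{n}$. The key simplification that renders the final estimate trivial is the monotone bound $1/(1+t)\le 1$ on $[0,\infty)$, which lets one avoid any appeal to Watson's lemma or to more delicate uniform estimates for the remainder.
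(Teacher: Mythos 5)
Your argument is correct and every step holds up: the substitution $t=y-1$ gives $F(x)=\int_0^\infty e^{-xt}(1+t)^{-1}\,dt$, the finite geometric identity is exact for $t\ge 0$, the termwise Gamma integrals produce precisely the claimed partial sum after the index shift, and the bound $0<(1+t)^{-1}\le 1$ yields $|R_{n}(x)|\le n!/x^{n+1}$, the same explicit constant the paper obtains. The route is genuinely different from the paper's, however. The paper substitutes $t=x(y-1)$, writes $F(x)=\int_0^\infty e^{-t}(t+x)^{-1}\,dt$, and generates the expansion by integrating by parts $n$ times, so the partial sum appears one term per integration and the remainder is $(-1)^{n}n!\int_0^\infty e^{-t}(t+x)^{-(n+1)}\,dt$, bounded via $(t+x)^{n+1}\ge x^{n+1}$. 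You trade the iterated integration by parts for a single algebraic expansion of the weight $1/(1+t)$ followed by the evaluation $\int_0^\infty e^{-xt}t^{k}\,dt=k!/x^{k+1}$; this makes the origin of the coefficients $(-1)^{k-1}(k-1)!$ more transparent and replaces an implicit induction on the number of integrations by parts with one closed-form identity, at the cost of invoking the Gamma integral rather than only repeated elementary antidifferentiation. The two remainder representations are of course equal, and both estimates give $M_{n}=n!$, so nothing is lost or gained quantitatively; your variant is, if anything, slightly shorter to write out in full.
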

\begin{proof}
We set $t:=x(y-1)$. Then $F(x)=\displaystyle\int_0^\infty e^{-t}\frac{1}{t+x}dt$. Performing an integration by parts $n$-times, we have that $$F(x)=\sum_{k=1}^{n} (-1)^{k-1}(k-1)!\left(\frac{1}{x}\right)^{k}+(-1)^{n}n!\int_0^\infty e^{-t}\frac{1}{(t+x)^{n+1}}dt.$$
\ \ \ \ Since $t$ is non-negative, the following inequality holds:
$$0<\displaystyle\int_0^\infty e^{-t}\frac{1}{(t+x)^{n+1}}dt< \displaystyle\int_0^\infty e^{-t}x^{-(n+1)}dt=x^{-(n+1)}\displaystyle\int_0^\infty e^{-t}dt=\frac{1}{x^{n+1}}.$$
\ \ \ \ Therefore $$F(x)-\left(\sum_{k=1}^{n} (-1)^{k-1}(k-1)!\left(\frac{1}{x}\right)^{k}\right)=\mathcal{O}\left(\frac{1}{x^{n+1}}\right)\ \ \ \ (x\to \infty).$$
\end{proof}
\vspace{5mm}
Using the identity obtained by setting $n=1$ in this proposition, we obtain that
$$\frac{F(x)}{\displaystyle\frac{1}{x}}=1+\frac{1}{x}\cdot \frac{\mathcal{O}\left(\displaystyle\frac{1}{x^{2}}\right)}{\displaystyle\frac{1}{x^{2}}}\rightarrow 1\ \ \ \ (x\to \infty).$$
i.e, $F(x)\sim x^{-1}$ as $x\rightarrow \infty .$ Putting $F_{1}(x):=\displaystyle\frac{1}{F(x)}$, we have that
$$F_{1}(x)\sim x\ \ \ \ (x\to \infty).$$
Next we put $F_{2}(x):=\displaystyle\frac{1}{F_{1}(x)-x}.$ 
Using the identity obtained by setting $n=1$ (resp. $n=2$) in the denominator (resp. the numerator), we have that
$$F_{1}(x)-x=\displaystyle\frac{1-xF(x)}{F(x)}=\displaystyle\frac{1-x^{2} \mathcal{O}\left(\displaystyle\frac{1}{x^{3}}\right)}{1+x\mathcal{O}\left(\displaystyle\frac{1}{x^{2}}\right)} \rightarrow 1\ \ \ \ (x\to \infty).$$ 
Then, we get that
$$F_{2}(x)\sim 1\ \ \ \ (x\to \infty).$$
We inductively define
$$F_{m}(x):=\frac{1}{F_{m-1}(x)-(\mbox{main term of}\ F_{m-1}(x)\ \mbox{at\ infinity}).}$$
\ \ \ \ By calculating $F_{m}(x)$ $(m=3, 4, 5, 6)$, we conjecture that for any positive integer $m$,
$$ F_{2m-1}(x)\sim x,\ F_{2m}(x)\sim \frac{1}{m}\ \ \ \ (x\to \infty).$$
The first result of this paper is to prove the conjecture. Therefore we can state the following theorem.
\begin{thm}
For any positive integer $m$, we have that
$$F_{2m-1}(x)\sim x,\ F_{2m}(x)\sim \frac{1}{m}\ \ \ \ (x\to \infty).$$
\end{thm}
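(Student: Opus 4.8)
The plan is to replace the nonlinear recursion defining the $F_m$ by a \emph{linear} three-term recurrence. Write $\mu_m$ for the main term of $F_m$ at infinity, so that the theorem asserts $\mu_{2m-1}=x$ and $\mu_{2m}=1/m$. I introduce the auxiliary sequence
$$\Phi_0:=1,\qquad \Phi_1:=F,\qquad \Phi_{m+1}:=\Phi_{m-1}-\mu_m\Phi_m,$$
and claim $F_m=\Phi_{m-1}/\Phi_m$ for all $m\ge 1$. This is immediate by induction: if $F_m=\Phi_{m-1}/\Phi_m$, then $F_m-\mu_m=\Phi_{m+1}/\Phi_m$, hence $F_{m+1}=1/(F_m-\mu_m)=\Phi_m/\Phi_{m+1}$. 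Under this reduction the theorem becomes a statement about the rate of decay of $\Phi_m$ as $x\to\infty$: the main term of $F_m=\Phi_{m-1}/\Phi_m$ is the quotient of the leading asymptotic terms of $\Phi_{m-1}$ and $\Phi_m$, so the values $\mu_{2m-1}=x$ and $\mu_{2m}=1/m$ will follow once I show $\Phi_{2m-1}\sim (m-1)!\,x^{-m}$ and $\Phi_{2m}\sim m!\,x^{-m}$. Indeed these give $F_{2m-1}=\Phi_{2m-2}/\Phi_{2m-1}\sim x$ and $F_{2m}=\Phi_{2m-1}/\Phi_{2m}\sim 1/m$.

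The key device for controlling $\Phi_m$ is an integral representation generalizing the one used for $F=\Phi_1$ in Proposition~2.1. I would prove by induction, using the linear recurrence, that for every $m\ge 1$
$$\Phi_m(x)=\int_0^\infty \frac{p_m(t)\,e^{-t}}{t+x}\,dt$$
for an explicit $p_m(t)\in\mathbb{Q}[t]$ with $p_1=1$. Writing $x/(t+x)=1-t/(t+x)$ in the odd steps (where $\mu_m=x$), the recurrence for $\Phi_m$ translates into
$$p_{2m}=p_{2m-2}+t\,p_{2m-1},\qquad p_{2m+1}=p_{2m-1}-\tfrac1m\,p_{2m}\qquad(p_0:=0).$$
Expanding $1/(t+x)=\sum_{k\ge 0}(-t)^k x^{-k-1}$ exactly as in Proposition~2.1, the decay of $\Phi_m$ at infinity is governed by the moments $\int_0^\infty t^k p_m(t)e^{-t}\,dt$: the order of vanishing is one more than the index of the first nonvanishing moment, and the leading coefficient is that moment up to sign. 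I would then identify $p_{2m-1}$ with the Laguerre polynomial $L_{m-1}(t)$ (and $p_{2m}$ with $t\,L_{m-1}^{(1)}(t)$); the orthogonality relations $\int_0^\infty t^k L_{m-1}(t)e^{-t}\,dt=0$ for $k<m-1$ together with $\int_0^\infty t^{m-1}L_{m-1}(t)e^{-t}\,dt=(-1)^{m-1}(m-1)!$ yield precisely the decay order $x^{-m}$ and the leading coefficients $(m-1)!$ and $m!$ required in the first paragraph.

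Assembling these pieces gives a genuinely non-circular simultaneous induction: at stage $m$ the functions $\Phi_{m-1},\Phi_m$ are already constructed, their leading asymptotics are read off from the moment computation, this \emph{determines} $\mu_m$ (verifying it equals $x$ or $1/m$), and only then is $\Phi_{m+1}=\Phi_{m-1}-\mu_m\Phi_m$ formed. The appeal to the known full asymptotic series of $F$ is unnecessary; everything is driven by the polynomials $p_m$ and their moments.

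The main obstacle, and the part requiring care, is showing that the integral representation \emph{persists} through the odd-to-even step. There $\mu_m=x$ generates, via $x/(t+x)=1-t/(t+x)$, a stray constant $-\int_0^\infty p_{2m-1}(t)e^{-t}\,dt$, and the representation survives only because this constant vanishes; this is exactly the orthogonality $\int_0^\infty L_{m-1}(t)e^{-t}\,dt=0$ for $m\ge 2$, while the case $m=1$ is absorbed by the term $\Phi_0=1$. Thus the combinatorial heart of the proof is the orthogonality and moment evaluation for the $p_m$, proved directly from their two-step recurrence, rather than the hopeless bookkeeping of the individual asymptotic coefficients of the $F_m$ (which are already irregular for $F_1$). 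Verifying that the leading moments cancel and propagate so that $\Phi_{2m-1}$ and $\Phi_{2m}$ acquire the exact leading coefficients $(m-1)!$ and $m!$ is where the real work lies, and is what forces the identification with the Laguerre family.
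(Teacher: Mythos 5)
Your proposal is correct, and it takes a genuinely different route from the paper's. The paper linearizes the same recursion via the $2\times2$ matrix products $D_k$ of the linear fractional transformations, obtains three-term recurrences for the bottom-row entries $r_k(x),s_k(x)$, reduces the asymptotics of $F_{2k+1}=D_k(1/F)$ and $F_{2k+2}$ to exact congruences modulo $I=x^{-1}\mathbb{Q}[x^{-1}]$ between the coefficients of $r_k,s_k$ and the asymptotic coefficients $(-1)^{l-1}(l-1)!$ of $F$ supplied by Proposition~2.1, and then verifies those congruences by a coefficient-level induction resting on the Hankel matrices $A^{(k-1)}$, Cramer's rule, and binomial identities obtained by differentiating $x^{m-1}(1-x)^{k-1}$. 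Your $\Phi_{2k+1}$ is, up to normalization, the paper's denominator $r_k(x)+s_k(x)F(x)$, and since $\int_0^\infty p(t)e^{-t}(t+x)^{-1}dt=p(-x)F(x)+(\text{polynomial in }x)$, the paper's $s_k(x)=\sum_l\binom{k}{l}x^l/l!$ is exactly your $L_k(-x)$: its key relations (4.7)--(4.8) are the Laguerre orthogonality and moment evaluations in disguise, proved there by hand. Your route buys conceptual economy --- only the integral representation of $F$ is needed rather than its full asymptotic series, the determinant machinery of Section~3 disappears, and the leading constants $(m-1)!$ and $m!$ drop out of one standard moment computation --- while the paper's route stays entirely elementary and, as a by-product, produces the explicit coefficients $S^{(k)}_l=\binom{k}{l}/l!$ and $R^{(k)}_{k'}$ that it reuses in Theorem~5.1. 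The one delicate point is the one you yourself flag: persistence of the integral representation through the odd steps needs $\int_0^\infty p_{2m-1}(t)e^{-t}\,dt=0$, so the identification $p_{2m-1}=L_{m-1}$, $p_{2m}=tL^{(1)}_{m-1}$ must be established formally from the two-step recurrence (via the standard identities $L_m=L_{m-1}-\frac{t}{m}L^{(1)}_{m-1}$ and $L^{(1)}_{m-1}=L^{(1)}_{m-2}+L_{m-1}$) before, and independently of, the analytic induction that determines each $\mu_m$; you set up exactly that order of argument, so there is no gap.
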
 
\section{Preliminaries}
\ \ \ \ In this section we state properties of the matrix needed to prove Theorem 2.1. Let $k$ be an integer with $k\ge 2$. We define the matrix  $A^{(k-1)}$ by
$$A^{(k-1)}:=(a_{ij})\ \ \ \ 1\le i,j \le k,$$
where $a_{ij}=(-1)^{i+j-2}(i+j-2)!$. \\
\ \ \ \ Let $A^{(k-1)}_{i, j}$ be the matrix obtained by deleting the $i$ th row and the $j$ th column of $A^{(k-1)}$. 
\begin{thm}
Let $m$ be an integer with $1\le m\le k$. Then we have that
$$\mathrm{det}A^{(k-1)}_{k, m}=(-1)^{k+m}\frac{1}{((m-1)!)^{2}}\cdot \frac{1}{(k-m)!}\mathrm{det}A^{(k-1)}.$$
\end{thm}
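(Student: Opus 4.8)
The plan is to remove the alternating signs, recognize the resulting matrix as a Hankel matrix of moments, and then read off both $\det A^{(k-1)}$ and the required minor from the Laguerre polynomials. First I would note that $a_{ij}=(-1)^{i+j-2}(i+j-2)!=(-1)^{i+j}(i+j-2)!$, so $A^{(k-1)}$ is exactly the Hankel matrix of the sequence $c_n=(-1)^n n!$ from the Introduction. Conjugating by the diagonal sign matrix $D=\mathrm{diag}((-1)^1,\dots,(-1)^k)$ removes the signs: the matrix $B=(b_{ij})$ with $b_{ij}=(i+j-2)!$ satisfies $A^{(k-1)}=DBD$, so $\det A^{(k-1)}=(\det D)^2\det B=\det B$. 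Restricting this scaling to the rows $\{1,\dots,k-1\}$ and the columns $\{1,\dots,k\}\setminus\{m\}$ surviving in the minor contributes the sign $\prod_{i=1}^{k-1}(-1)^i\prod_{j\neq m}(-1)^j=(-1)^{k+m}$, whence $\det A^{(k-1)}_{k,m}=(-1)^{k+m}\det B_{k,m}$, where $B_{k,m}$ is the analogous minor of $B$. The theorem thus reduces to the sign-free identity $\det B_{k,m}=\frac{1}{((m-1)!)^2(k-m)!}\det B$.

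Next I would use $b_{ij}=(i+j-2)!=\int_0^\infty t^{i+j-2}e^{-t}\,dt$, which presents $B$ as the $k\times k$ moment matrix of the weight $e^{-t}$ on $[0,\infty)$. Its monic orthogonal polynomials are the monic Laguerre polynomials
$$\pi_n(x)=\sum_{j=0}^{n}\frac{(-1)^{n-j}(n!)^2}{(j!)^2(n-j)!}\,x^j,$$
with squared norms $h_n=\int_0^\infty \pi_n(x)^2e^{-t}\,dt=(n!)^2$, both of which I would cite or verify by a short integration by parts. The factorization $CBC^{\top}=\mathrm{diag}(h_0,\dots,h_{k-1})$, with $C$ the unit lower-triangular matrix of coefficients of $\pi_0,\dots,\pi_{k-1}$ (so $\det C=1$), then gives $\det B=\prod_{j=0}^{k-1}(j!)^2$, and hence $\det\widetilde B=\det B/((k-1)!)^2$ for the leading $(k-1)\times(k-1)$ principal submatrix $\widetilde B$ of $B$.

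The crux is to evaluate $\det B_{k,m}$ itself, for which I would invoke the classical determinantal representation of the degree-$(k-1)$ orthogonal polynomial: letting $M(x)$ be the $k\times k$ matrix whose first $k-1$ rows agree with those of $B$ and whose last row is $(1,x,\dots,x^{k-1})$, one has $\pi_{k-1}(x)=\det M(x)/\det\widetilde B$. Expanding $\det M(x)$ along its last row identifies the coefficient of $x^{m-1}$ in $\pi_{k-1}$ with $(-1)^{k+m}\det B_{k,m}/\det\widetilde B$, because deleting the last row and the $m$-th column of $M(x)$ gives exactly $B_{k,m}$. Comparing with the explicit Laguerre coefficient $\frac{(-1)^{k-m}((k-1)!)^2}{((m-1)!)^2(k-m)!}$ and substituting $\det\widetilde B=\det B/((k-1)!)^2$ yields $\det B_{k,m}=\frac{1}{((m-1)!)^2(k-m)!}\det B$; together with the reduction of the first paragraph this is the assertion.

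I expect the principal difficulty to lie in this last comparison: matching the cofactor sign $(-1)^{k+m}$ against the sign in the Laguerre coefficient, and keeping the $0$-indexed moment convention consistent with the $1$-indexed matrix $A^{(k-1)}$. A tidier packaging avoids the polynomial determinant by writing $\det B_{k,m}/\det B=(-1)^{k+m}(B^{-1})_{mk}$ and reading the $(m,k)$ entry off $B^{-1}=C^{\top}\mathrm{diag}(h_n^{-1})C$, in which only $\pi_{k-1}$ contributes since the $k$-th column index is maximal; alternatively, one may establish the ratio $\det B_{k,m}/\det B$ directly by induction on $k$ through row and column operations, trading conceptual brevity for a longer computation.
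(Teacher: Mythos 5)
Your proof is correct, but it takes a genuinely different route from the paper. The paper works directly with $A^{(k-1)}$: it exhibits the explicit vector ${\bm b}_k$ with entries $\frac{1}{((i-1)!)^2(k-i)!}$, verifies ${\bm a}_m{\bm b}_k=\delta_{m,k}$ by reducing the sum to $1+\sum_i(-1)^i\,{}_{k-1}C_i\cdot{}_{m+i-1}C_i$ and evaluating that via the $(m-1)$-th derivative of $x^{m-1}(1-x)^{k-1}$ at $x=1$, proves $\det A^{(k-1)}\neq 0$ by induction using the same relations, and then reads the minors off the cofactor formula for the last column of $(A^{(k-1)})^{-1}$. You instead strip the signs by conjugating with $\mathrm{diag}((-1)^i)$ (your sign bookkeeping $(-1)^{k+m}$ for the minor checks out, since $(-1)^{k^2-m}=(-1)^{k+m}$), recognize the resulting matrix as the moment Hankel matrix of $e^{-t}$ on $[0,\infty)$, and import the Laguerre machinery: the factorization $CBC^{\top}=\mathrm{diag}(h_j)$ gives $\det B=\prod_{j=0}^{k-1}(j!)^2$, and the determinantal representation of $\pi_{k-1}$ expanded along its last row turns the minors $\det B_{k,m}$ into Laguerre coefficients. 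The comparison of coefficients is consistent: $(-1)^{k+m}\det B_{k,m}/\det\widetilde B=(-1)^{k-m}((k-1)!)^2/(((m-1)!)^2(k-m)!)$ gives exactly the claimed ratio. What your route buys is brevity (granting classical orthogonal-polynomial facts), a conceptual explanation of why squared factorials appear (they are the Laguerre norms $h_n=(n!)^2$), and, as a by-product, the value of $\det A^{(k-1)}$ and the identification of the later coefficients $S_l^{(k)}={}_kC_l/l!$ as Laguerre coefficients, both of which the paper has to derive separately. What the paper's route buys is self-containedness: it needs only one elementary binomial identity and no appeal to the theory of orthogonal polynomials. It is worth noting the two arguments are secretly the same: the paper's vector ${\bm b}_k$ is, up to the normalization $(-1)^{k-i}((k-1)!)^2$, the coefficient vector of the monic Laguerre polynomial $\pi_{k-1}$, and the identity ${\bm a}_m{\bm b}_k=\delta_{m,k}$ is precisely the statement that $\pi_{k-1}$ is orthogonal to $1,x,\dots,x^{k-2}$ and has norm $((k-1)!)^2$. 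The only loose ends in your write-up are the standard facts you defer (explicit monic Laguerre coefficients, $h_n=(n!)^2$, the determinantal formula for $\pi_{k-1}$, and the nonvanishing of $\det\widetilde B$ needed to invoke it); all are classical and the last follows from your own product formula, so none is a genuine gap.
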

\begin{proof}
If $\mathrm{det}A^{(k-1)} \neq 0$, the inverse of $A^{(k-1)}$ is given by$$(A^{(k-1)})^{-1}=\frac{1}{\mathrm{det}A^{(k-1)}}\tilde{A}^{(k-1)},$$
where $\tilde{A}^{(k-1)}$ is the adjugate matrix of $A^{(k-1)}$. The $(i,j)$ entry of the $\tilde{A}^{(k-1)}$ is $(-1)^{i+j}\mathrm{det}A^{(k-1)}_{j,i}$. In particular, the $k$ th column of $\displaystyle\frac{1}{\mathrm{det}A^{(k-1)}}\tilde{A}^{(k-1)}$ is 
$$\left(
\begin{array}{ccccc}
    (-1)^{k+1}\displaystyle\frac{\mathrm{det}A^{(k-1)}_{k,1}}{\mathrm{det}A^{(k-1)}} \\ \\
    (-1)^{k+2}\displaystyle\frac{\mathrm{det}A^{(k-1)}_{k,2}}{\mathrm{det}A^{(k-1)}} \\ \\
    \vdots \\ \\
    (-1)^{2k}\displaystyle\frac{\mathrm{det}A^{(k-1)}_{k,k}}{\mathrm{det}A^{(k-1)}}
    \end{array}
    \right).$$
    Therefore, we shall show the following equality.
    $$\left(
\begin{array}{ccccc}
    (-1)^{k+1}\displaystyle\frac{\mathrm{det}A^{(k-1)}_{k,1}}{\mathrm{det}A^{(k-1)}} \\ \\
    (-1)^{k+2}\displaystyle\frac{\mathrm{det}A^{(k-1)}_{k,2}}{\mathrm{det}A^{(k-1)}} \\ \\
    \vdots \\ \\
    (-1)^{2k}\displaystyle\frac{\mathrm{det}A^{(k-1)}_{k,k}}{\mathrm{det}A^{(k-1)}}
    \end{array}
    \right)=
    \left(
    \begin{array}{cccc}
    \displaystyle\frac{1}{(0!)^{2}}\cdot \displaystyle\frac{1}{(k-1)!} \\ \\
    \displaystyle\frac{1}{(1!)^{2}}\cdot \displaystyle\frac{1}{(k-2)!} \\ \\
    \vdots \\ \\
    \displaystyle\frac{1}{((k-1)!)^{2}}\cdot \displaystyle\frac{1}{0!}
    \end{array}
    \right)(=: {\bm b}_{k}).$$ 
    Let ${\bm a}_{m}$ be the $m$ th row of $A^{(k-1)}$. Since the $k$ th column vector ${\bm c}$ of $(A^{(k-1)})^{-1}$ is characterized by the conditions: ${\bm a_{m}}{\bm c}=\delta_{m,k}\ \ (1\le m\le k)$, for a proof of theorem 3.1 we will show the followings:
    $${\bm a}_{m}{\bm b}_{k}=\delta_{m,k};$$
 $$\mathrm{det}A^{(k-1)}\neq 0,$$
    where $\delta_{m,k}$ is  the Kronecker delta.\\
    \ \ \ \ First we show ${\bm a}_{m}{\bm b}_{k}=\delta_{m,k}$. Since ${\bm a}_{m}=((-1)^{m-1}(m-1)!, \cdots, (-1)^{m+k-2}(m+k-2)!)$, we have that  $${\bm a}_{m}{\bm b}_{k}=\sum_{i=1}^{k} (-1)^{m+i}(m+i-2)!\frac{1}{((i-1)!)^{2}}\cdot \frac{1}{(k-i)!}.$$
Putting $a_{i}:=(-1)^{m+i}(m+i-2)!\displaystyle\frac{1}{((i-1)!)^{2}}\cdot \displaystyle\frac{1}{(k-i)!}$, they satisfy the following recurrence relation:
$$a_{i+1}=(-1)\left(\frac{k}{i}-1\right) \left(\frac{m-1}{i}+1\right)a_{i}\ \ \ \ (1\le i\le k).$$
Therefore, $${\bm a}_{m}{\bm b}_{k}=(-1)^{m+1}\displaystyle\frac{(m-1)!}{(k-1)!}\left\{1+\sum_{i=1}^{k-1} (-1)^{i}\left(\frac{k}{1}-1\right)\cdots \left(\frac{k}{i}-1\right) \left(\frac{m-1}{1}+1\right) \cdots \left(\frac{m-1}{i}+1\right) \right\}.$$
We put $$f_{k-1}(x,y):=1+\sum_{i=1}^{k-1} (-1)^{i}\left(\frac{x}{1}-1\right)\cdots \left(\frac{x}{i}-1\right) \left(\frac{y}{1}+1\right) \cdots \left(\frac{y}{i}+1\right).$$
We now prove the following equalities. 
$$f_{k-1}(k,m-1)=
  \begin{cases}
    \ \ \ 0\ \ \ \ \ \ \ \ \ \  \mathrm{if}\ \ 1\le m\le k-1, \\
    (-1)^{k-1}\ \ \ \ \mathrm{if}\ \ m=k.
  \end{cases}$$
Since $$\left(\displaystyle\frac{k}{1}-1\right)\cdots \left(\displaystyle\frac{k}{i}-1\right)={}_{k-1} C _i\ $$and \vspace{1mm}
  $$\left(\displaystyle\frac{m-1}{1}+1\right) \cdots \left(\displaystyle\frac{m-1}{i}+1\right)={}_{m+i-1} C _i,$$
We have that  $$f_{k-1}(k,m-1)=1+\sum_{i=1}^{k-1} (-1)^{i}{}_{k-1} C _i \cdot {}_{m+i-1} C _i.$$
Since ${}_{m+i-1} C _i= {}_{m+i-1} C _{m-1}=\displaystyle\frac{(m+i-1)\cdots (i+1)}{(m-1)!}$, we obtain that 
$$f_{k-1}(k,m-1)=1+\frac{1}{(m-1)!}\sum_{i=1}^{k-1} (-1)^{i}{}_{k-1} C _i (m+i-1)\cdots (i+1).$$
\ \ \ \ By considering the $m-1$ th derivative of the both sides of the identity $$x^{m-1}(1-x)^{k-1}=\displaystyle\sum_{i=0}^{k-1} (-1)^{i} {}_{k-1} C _i x^{m+i-1},$$ we have that
$$\sum_{j=0}^{m-1} {}_{m-1} C _j (x^{m-1})^{(j)}((1-x)^{k-1})^{(m-1-j)} =(m-1)!+\sum_{i=1}^{k-1} (-1)^{i} {}_{k-1} C _i (m+i-1)\cdots (i+1)x^{i}.$$
If $1\le m \le k-1$, then 
$$((1-x)^{k-1})^{(m-1-j)}|_{x=1}=0\ \ \ \ (0\le j\le m-1).$$
So by substituting 1 for $x$ on the both sides, we have $$\displaystyle\sum_{i=1}^{k-1}(-1)^{i} {}_{k-1} C _i (m+i-1)\cdots (i+1)=-(m-1)!.$$ Therefore $$f_{k-1}(k,m-1)=1+\frac{1}{(m-1)!}(-(m-1)!)=0.$$
If $m=k$, then 
$$((1-x)^{k-1})^{(m-1-j)}|_{x=1}=\begin{cases}\ \ \ 0\ \ \ \ \ \ \ \ \ \ \mathrm{if}\ \ 1\le j\le m-1, \\
(-1)^{k-1}(k-1)!\ \ \ \ \mathrm{if}\ \ j=0.
\end{cases}$$ Therefore we have that $$\displaystyle\sum_{i=1}^{k-1} (-1)^{i} {}_{k-1} C _i (k+i-1)\cdots (i+1)=(k-1)!((-1)^{k-1}-1).$$ Hence $$f_{k-1}(k,k-1)=1+\frac{1}{(k-1)!}\cdot (k-1)!((-1)^{k-1}-1)=(-1)^{k-1}.$$
Therefore ${\bm a}_{m}{\bm b}_{k}=\delta_{m,k}$.\\
\ \ \ \ Next we prove $\mathrm{det}A^{(k-1)}\neq 0$ by induction on $k$ by using ${\bm a}_{m}{\bm b}_{k}=\delta_{m,k}\ \ (1\le m\le k)$. If $k=2$, \vspace{5mm} $A^{(1)}=\left(
    \begin{array}{cc}
     1 & -1   \\
      -1 & 2  
    \end{array}
  \right)$, therefore
   $$\mathrm{det}A^{(1)}=1\neq 0.$$
Assume that $\mathrm{det}A^{(k-2)}\neq 0$ for $k\ge3$. We will show that ${\bm a}_{1},\cdots, {\bm a}_{k}$ are linearly independent.
Assume that $x_{1}{\bm a}_{1}+\cdots +x_{k}{\bm a}_{k}={\bm 0}$. Then we have that $$x_{1}\cdot 0+\cdots x_{k-1}\cdot 0+x_{k}\cdot 1=0$$ by multiplying the both sides by ${\bm b}_{k}$ from right. Therefore $x_{k}=0$.\\
\ \ \ \ Let ${\bm a}^{'}_{m}$ be the $m$ th row vector of $A^{(k-2)}$. Since $x_{k}=0$, $$x_{1}{\bm a}_{1}+\cdots +x_{k-1}{\bm a}_{k-1}={\bm 0}.$$
In particular, it follow that $$x_{1}{\bm a}^{'}_{1}+\cdots +x_{k-1}{\bm a}^{'}_{k-1}={\bm 0}.$$ By assumption, ${\bm a}^{'}_{1},\cdots, {\bm a}^{'}_{k-1}$ is linearly independent. Therefore $$x_{1}=x_{2}=\cdots =x_{k-1}=0.$$ Hence ${\bm a}_{1}, \cdots, {\bm a}_{k}$ are linearly independent, i.e., $\mathrm{det}A^{(k-1)}\neq 0.$
\end{proof}
\vspace{5mm}
Since $A_{k,k}^{(k-1)}=A^{(k-2)}$, Theorem3.1 implies $$\mathrm{det}A^{(k-1)}=((k-1)!(k-2)!\cdots 2!\cdot 1!)^{2}.$$
\ \ \ \ We remark that the determinant of $A^{(k-1)}$ is the Hankel determinant $H_{k}^{(0)}$. Hence we obtain that
$$H_{k}^{(0)}=\mathrm{det}A^{(k-1)}=((k-1)!(k-2)!\cdots 2!\cdot 1!)^{2}.$$
Also the determinant of $A^{(k-1)}_{k,1}$ is the Hankel determinant $H_{k-1}^{(1)}$. Using the identity obtained by setting $m=1$ in Theorem3.1, we have that
$$H_{k-1}^{(1)}=\mathrm{det}A_{k,1}^{(k-1)}=(-1)^{k+1}\frac{1}{(k-1)!}\mathrm{det}A^{(k-1)}=(-1)^{k+1}(k-1)!((k-2)!(k-3)!\cdots 2!\cdot 1!)^{2}.$$
\section{Proof of Theorem2.1}
\ \ \ \ In this section, we will prove Theorem2.1.\\
\ \ \ \ We prove Theorem2.1 by induction. If $m=1$, we have seen that
$$F_{1}(x)\sim x,\ F_{2}(x)\sim 1\ \ \ \ (x\to \infty).$$
Let $k$ be an integer grater than or equal to 2. We assume that for any integer $m$ with $2\le m\le k$,
$$F_{2m-1}(x)\sim x,\ F_{2m}(x)\sim \displaystyle\frac{1}{m}\ \ \ \ (x\to \infty).$$
Using linear fractional transformations, these assumptions (other than $m=k$) are expressed as  that for above $m$
$$F_{2m-1}(x)=\left(
    \begin{array}{cc}
     0 & 1   \\
      1 & -\displaystyle\frac{1}{m-1}  
    \end{array}
  \right)F_{2m-2}(x),\ \ F_{2m-2}(x)=\left(
    \begin{array}{cc}
     0 & 1   \\
      1 & -x 
    \end{array}
  \right)F_{2m-3}(x).$$
  Hence,
   \begin{eqnarray*}F_{2m-1}(x)&=&\left(
    \begin{array}{cc}
     0 & 1   \\
      1 & -\displaystyle\frac{1}{m-1}  
    \end{array}
  \right)\left(
    \begin{array}{cc}
     0 & 1   \\
      1 & -x 
    \end{array}
  \right)F_{2m-3}(x) \\ \\ \\ 
  &=&\left(
    \begin{array}{cc}
     1 & -x   \\
      -\displaystyle\frac{1}{m-1} & 1+\displaystyle\frac{x}{m-1} 
    \end{array}
  \right)F_{2m-3}(x).\end{eqnarray*}
  Therefore, we obtain that
  \begin{eqnarray*}
  F_{2k-1}(x)&=&\left(
    \begin{array}{cc}
     1 & -x   \\
      -\displaystyle\frac{1}{k-1} & 1+\displaystyle\frac{x}{k-1} 
    \end{array}
  \right)\cdots \left(
    \begin{array}{cc}
     1 & -x   \\
      -\displaystyle\frac{1}{2} & 1+\displaystyle\frac{x}{2} 
    \end{array}
  \right)\left(
    \begin{array}{cc}
     1 & -x   \\
      -1 & 1+x 
    \end{array}
  \right)F_{1}(x)\\ \\ \\
  &=& \left(
    \begin{array}{cc}
     1 & -x   \\
      -\displaystyle\frac{1}{k-1} & 1+\displaystyle\frac{x}{k-1} 
    \end{array}
  \right)\cdots \left(
    \begin{array}{cc}
     1 & -x   \\
      -\displaystyle\frac{1}{2} & 1+\displaystyle\frac{x}{2} 
    \end{array}
  \right)\left(
    \begin{array}{cc}
     1 & -x   \\ 
      -1 & 1+x 
    \end{array}
  \right)\displaystyle\frac{1}{F(x)}.
  \end{eqnarray*}
  \ \ \ \ We put 
  $$D_{k-1}:=\left(
    \begin{array}{cc}
     1 & -x   \\
      -\displaystyle\frac{1}{k-1} & 1+\displaystyle\frac{x}{k-1} 
    \end{array}
  \right)\cdots \left(
    \begin{array}{cc}
     1 & -x   \\
      -\displaystyle\frac{1}{2} & 1+\displaystyle\frac{x}{2} 
    \end{array}
  \right)\left(
    \begin{array}{cc}
     1 & -x   \\ 
      -1 & 1+x 
    \end{array}
  \right).$$
  Here, we define $D_{0}:=\left(\begin{array}{cc}
     1 & 0   \\ 
      0 & 1 
    \end{array}\right)$ since $F_{1}(x)=\left(\begin{array}{cc}
     1 & 0   \\
      0 & 1
    \end{array}
  \right)\displaystyle\frac{1}{F(x)}$.
  We also set 
  $$D_{k-1}=\left(
    \begin{array}{cc}
     p_{k-1}(x) & q_{k-1}(x)   \\
      r_{k-1}(x) & s_{k-1}(x) 
    \end{array}
  \right).$$
  Then, we get that
   $$\left(
    \begin{array}{cc}
     p_{k}(x) & q_{k}(x)   \\
      r_{k}(x) & s_{k}(x) 
    \end{array}
  \right)=D_{k}=\left(
    \begin{array}{cc}
     1 & -x   \\
      -\displaystyle\frac{1}{k} & 1+\displaystyle\frac{x}{k} 
    \end{array}
  \right)\left(
    \begin{array}{cc}
     p_{k-1}(x) & q_{k-1}(x)  \\
      r_{k-1}(x) & s_{k-1}(x) 
    \end{array}
  \right)$$
  $$=\left(
  \begin{array}{cc}
  p_{k-1}(x)-xr_{k-1}(x) & q_{k-1}(x)-xs_{k-1}(x) \\ \vspace{3mm}
  -\displaystyle\frac{1}{k}p_{k-1}(x)+\left(1+\displaystyle\frac{x}{k}\right)r_{k-1}(x) &  -\displaystyle\frac{1}{k}q_{k-1}(x)+\left(1+\displaystyle\frac{x}{k}\right)s_{k-1}(x)
  \end{array}\right) .$$
  Comparing the (2,1)th entries and solving for $p_{k-1}(x)$, we have that $p_{k-1}(x)=-kr_{k}(x)+(k+x)r_{k-1}(x)$. Substituting this into the equation obtained by comparing the (1,1)th entries, we get that $$p_{k}(x)=-k(r_{k}(x)-r_{k-1}(x)).$$
  Similarly we have that 
  $$q_{k}(x)=-k(s_{k}(x)-s_{k-1}(x)).$$
  Therefore,
  $$D_{k}=\left(
    \begin{array}{cc}
     -k(r_{k}(x)-r_{k-1}(x)) &  -k(s_{k}(x)-s_{k-1}(x))  \\
      r_{k}(x) & s_{k}(x) 
    \end{array}
  \right).$$
  Moreover substituting $p_{k-1}(x)=-(k-1)(r_{k-1}(x)-r_{k-2}(x))$ (resp. $q_{k-1}(x)=-(k-1)(s_{k-1}(x)-s_{k-2}(x))$) into the equation obtained by comparing the (2,1)th (resp. (2,2)th) entries, we have the following recurrence relations:
   $$ \begin{cases}
    r_{k}(x)=\displaystyle\frac{2k-1+x}{k}r_{k-1}(x)-\displaystyle\frac{k-1}{k}r_{k-2}(x), \\ \\
   s_{k}(x)=\displaystyle\frac{2k-1+x}{k}s_{k-1}(x)-\displaystyle\frac{k-1}{k}s_{k-2}(x),
  \end{cases}$$
  with initial conditions: $(r_{0}(x), r_{1}(x))=(0, -1)$, $(s_{0}(x), s_{1}(x))=(1, 1+x)$. We see that the degree of $r_{k}(x)$ and $s_{k}(x)$ are $k-1$ and $k$, respectively. \\
  \ \ \ \ By our definition and assumption: $F_{2k}(x)\sim \displaystyle\frac{1}{k}$\ \ ($x\to \infty$), we obtain that
  $$F_{2k+1}(x)=\displaystyle\frac{1}{F_{2k}-\displaystyle\frac{1}{k}}=\left(
    \begin{array}{cc}
     1 & -x   \\
      -\displaystyle\frac{1}{k} & 1+\displaystyle\frac{x}{k} 
    \end{array}
  \right)F_{2k-1}(x).$$ Then we have that
  $$F_{2k+1}(x)=D_{k}\displaystyle\frac{1}{F(x)}=\displaystyle\frac{-k(r_{k}(x)-r_{k-1}(x))-k(s_{k}(x)-s_{k-1}(x))F(x)}{r_{k}(x)+s_{k}(x)F(x)}.\eqno(4.1)$$
  Dividing both sides of (4.1) by $x$,  we get the following:
   $$\displaystyle\frac{F_{2k+1}(x)}{x}=\displaystyle\frac{-k(r_{k}(x)-r_{k-1}(x))-k(s_{k}(x)-s_{k-1}(x))F(x)}{xr_{k}(x)+xs_{k}(x)F(x)}.$$
   Here, using the identity in Proposition 2.1 obtained by setting $n=2k+1$ (resp. $n=2k$) in the denominator (resp. the numerator), we have that 
   $$\displaystyle\frac{F_{2k+1}(x)}{x}=\displaystyle\frac{-kx^{k}(r_{k}(x)-r_{k-1}(x))-kx^{k}(s_{k}(x)-s_{k-1}(x))\left(\displaystyle{\sum_{l=1}^{2k} (-1)^{l-1}(l-1)!\displaystyle\frac{1}{x^{l}}}+\mathcal{O}\left(\displaystyle\frac{1}{x^{2k+1}}\right)\right)}
 {x^{k+1}r_{k}(x)+x^{k+1}s_{k}(x)\left(\displaystyle{\sum_{l=1}^{2k+1} (-1)^{l-1}(l-1)!\displaystyle\frac{1}{x^{l}}}+\mathcal{O}\left(\displaystyle\frac{1}{x^{2k+2}}\right)\right)}.$$
 Since $$x^{j}\mathcal{O}\left(\displaystyle\frac{1}{x^{2k+1}}\right)=x^{j-(2k+1)}\cdot \displaystyle\frac{\mathcal{O}\left(\displaystyle\frac{1}{x^{2k+1}}\right)}{\displaystyle\frac{1}{x^{2k+1}}} \to 0\ \ \ \ \ \ (x\to\infty)$$ for $k \le j \le 2k$ and $$x^{j}\mathcal{O}\left(\displaystyle\frac{1}{x^{2k+2}}\right)\to 0\ \ \ \ \ \ (x\to \infty)$$ for $k+1\le j \le 2k+1$, we have that when $x\to \infty$,
 $$kx^{k}(s_{k}(x)-s_{k-1}(x))\mathcal{O}\left(\displaystyle\frac{1}{x^{2k+1}}\right)\to0,$$
  $$x^{k+1}s_{k}(x)\mathcal{O}\left(\displaystyle\frac{1}{x^{2k+2}}\right)\to0.$$
Therefore to show that $F_{2k+1}(x)\sim x\ \ (x\to \infty)$, it is sufficient to prove the following two congruence relations modulo $I:=\displaystyle\frac{1}{x}\mathbb{Q}\left[\displaystyle\frac{1}{x}\right]$:
$$
  -kx^{k}(r_{k}(x)-r_{k-1}(x))-kx^{k}(s_{k}(x)-s_{k-1}(x))\left(\displaystyle{\sum_{l=1}^{2k} (-1)^{l-1}(l-1)!\displaystyle\frac{1}{x^{l}}}\right)\equiv k! \ \ (\mathrm{mod}\ I),  \eqno(4.2)$$
 $$x^{k+1}r_{k}(x)+x^{k+1}s_{k}(x)\left(\displaystyle{\sum_{l=1}^{2k+1} (-1)^{l-1}(l-1)!\displaystyle\frac{1}{x^{l}}}\right)\equiv k!\ \ (\mathrm{mod}\ I). \eqno(4.3)
  $$
  \ \ \ \ We put $$r_{k}(x):=\sum_{k^{'}=0}^{k-1} R_{k^{'}}^{(k)}x^{k^{'}},\ s_{k}(x):=\sum_{k^{'}=0}^{k} S_{k^{'}}^{(k)}x^{k^{'}}\ \ \ \ (R_{k^{'}}^{(k)},\ S_{k^{'}}^{(k)}\in \mathbb{Q}).$$
  Then substituting them into the left hand side of (4.3), we have that
  $$\sum_{k^{'}=0}^{k-1} R_{k^{'}}^{(k)}x^{k^{'}+k+1}+\sum_{k^{'}=0}^{k} S_{k^{'}}^{(k)}x^{k^{'}+k+1}\left(\displaystyle{\sum_{l=1}^{2k+1} (-1)^{l-1}(l-1)!\displaystyle\frac{1}{x^{l}}}\right)\equiv k!.\ \ \ \ (\mathrm{mod} I)$$
  By calculating the coefficient of $x^{n}\ (n=2k, \cdots, 1, 0)$ on the left hand side, we see that this is equivalent to the following conditions:
 $$
  R_{n-k-1}^{(k)}+\sum_{l=1}^{2k+1-n} S_{l+n-k-1}^{(k)} (-1)^{l-1}(l-1)!=0\ \ \ \ (n=k+1, k+2, \cdots, 2k), \eqno(4.4)$$ 
 $$ \sum_{l=k-n+1}^{2k+1-n} S_{l+n-k-1}^{(k)}(-1)^{l-1}(l-1)!=0\ \ \ \ (n=1,2,\cdots, k), \eqno(4.5)$$ 
  $$\sum_{l=k+1}^{2k+1} S_{l-k-1}^{(k)}(-1)^{l-1}(l-1)!=k!.\eqno(4.6)$$
  We will show (4.4), (4.5) and (4.6) by induction on $k$. It is straightforward to check them for $k=1,2$. We assume (4.4), (4.5) and (4.6) for $k-1$ and $k-2$. Hence it holds that
  $$R_{n-k}^{(k-1)}+\sum_{l=1}^{2k-1-n} S_{l+n-k}^{(k-1)} (-1)^{l-1}(l-1)!=0\ \ \ \ (n=k, k+1, \cdots, 2k-2),$$
  $$R_{n-k+1}^{(k-2)}+\sum_{l=1}^{2k-3-n} S_{l+n-k+1}^{(k-2)} (-1)^{l-1}(l-1)!=0\ \ \ \ (n=k-1, k, \cdots, 2k-4),$$
  $$\sum_{l=k-n}^{2k-1-n} S_{l+n-k}^{(k-1)}(-1)^{l-1}(l-1)!=0\ \ \ \ (n=1,2,\cdots, k-1),$$
  $$\sum_{l=k-n-1}^{2k-3-n} S_{l+n-k+1}^{(k-2)}(-1)^{l-1}(l-1)!=0\ \ \ \ (n=1,2,\cdots, k-2),$$
  $$\sum_{l=k}^{2k-1} S_{l-k}^{(k-1)}(-1)^{l-1}(l-1)!=(k-1)!,$$
    $$\sum_{l=k-1}^{2k-3} S_{l-k+1}^{(k-2)}(-1)^{l-1}(l-1)!=(k-2)!.$$
    \ \ \ \ By recurrence relation, it holds that
  $$\sum_{k^{'}=0}^{k-1} R_{k^{'}}^{(k)}x^{k^{'}}=\frac{x+2k-1}{k}\sum_{k^{'}=0}^{k-2} R_{k^{'}}^{(k-1)}x^{k^{'}}-\frac{k-1}{k}\sum_{k^{'}=0}^{k-3} R_{k^{'}}^{(k-2)}x^{k^{'}}.$$
 By comparing the coefficients, we have that
 $$R_{k-1}^{(k)}=\frac{1}{k}R_{k-2}^{(k-1)},\ R_{k-2}^{(k)}=\frac{2k-1}{k}R_{k-2}^{(k-1)}+\frac{1}{k}R_{k-3}^{(k-1)},$$\vspace{0.5mm}
  $$R_{k^{'}}^{(k)}=\frac{2k-1}{k}R_{k^{'}}^{(k-1)}+\frac{1}{k}R_{k^{'}-1}^{(k-1)}-\frac{k-1}{k}R_{k^{'}}^{(k-2)}\ \ \ \ (k^{'}=1,2,\cdots,k-3),$$\vspace{0.5mm}
  $$R_{0}^{(k)}=\frac{2k-1}{k}R_{0}^{(k-1)}-\frac{k-1}{k}R_{0}^{(k-2)}.$$
  Similarly, we have that
    $$S_{k}^{(k)}=\frac{1}{k}S_{k-1}^{(k-1)},\ S_{k-1}^{(k)}=\frac{2k-1}{k}S_{k-1}^{(k-1)}+\frac{1}{k}S_{k-2}^{(k-1)},$$\vspace{0.5mm}
  $$S_{k^{'}}^{(k)}=\frac{2k-1}{k}S_{k^{'}}^{(k-1)}+\frac{1}{k}S_{k^{'}-1}^{(k-1)}-\frac{k-1}{k}S_{k^{'}}^{(k-2)}\ \ \ \ (k^{'}=1,2,\cdots,k-2),$$\vspace{0.5mm}
  $$S_{0}^{(k)}=\frac{2k-1}{k}S_{0}^{(k-1)}-\frac{k-1}{k}S_{0}^{(k-2)}.$$
  First we show (4.4) for $k$.
If $n=2k$, the left hand side of (4.4) is equal to
  $$R_{k-1}^{(k)}+S_{k}^{(k)}=\frac{1}{k}R_{k-2}^{(k-1)}+\frac{1}{k}S_{k-1}^{(k-1)}=\frac{1}{k}(R_{k-2}^{(k-1)}+S_{k-1}^{(k-1)}) =0,$$
  and if $n=2k-1$, that is equal to
   \begin{eqnarray*}
   R_{k-2}^{(k)}+S_{k-1}^{(k)}-S_{k}^{(k)}&=&\frac{2k-1}{k}R_{k-2}^{(k-1)}+\frac{1}{k}R_{k-3}^{(k-1)}+\frac{2k-1}{k}S_{k-1}^{(k-1)}+\frac{1}{k}S_{k-2}^{(k-1)}-\frac{1}{k}S_{k-1}^{(k-1)} \\ \\
   &=&\frac{2k-1}{k}(R_{k-2}^{(k-1)}+S_{k-1}^{(k-1)})+\frac{1}{k}(R_{k-3}^{(k-1)}-S_{k-1}^{(k-1)}+S_{k-2}^{(k-1)})\\ \\
   &=&0.
   \end{eqnarray*}
   If $k+2\le n\le 2k-2$, we have that  $$R_{n-k-1}^{(k)}=\frac{2k-1}{k}R_{n-k-1}^{(k-1)}+\frac{1}{k}R_{n-k-2}^{(k-1)}-\frac{k-1}{k}R_{n-k-1}^{(k-2)}$$
and  \begin{eqnarray*}
\sum_{l=1}^{2k+1-n}S_{l+n-k-1}^{(k)}(-1)^{l-1}(l-1)!=S_{k}^{(k)}(-1)^{2k-n}(2k-n)!&+&S_{k-1}^{(k)}(-1)^{2k-n-1}(2k-n-1)!\\
&+&\sum_{l=1}^{2k-n-1}S_{l+n-k-1}^{(k)}(-1)^{l-1}(l-1)!
\end{eqnarray*}
$$=\frac{1}{k}S_{k-1}^{(k-1)}(-1)^{2k-n}(2k-n)!+\left(\frac{2k-1}{k}S_{k-1}^{(k-1)}+\frac{1}{k}S_{k-2}^{(k-1)}\right)(-1)^{2k-n-1}(2k-n-1)!$$
$$+\sum_{l=1}^{2k-n-1}\left(\frac{2k-1}{k}S_{l+n-k-1}^{(k-1)}+\frac{1}{k}S_{l+n-k-2}^{(k-1)}-\frac{k-1}{k}S_{l+n-k-1}^{(k-2)}\right)(-1)^{l-1}(l-1)!.$$
Therefore, the left hand side of (4.4) is equal to
$$\frac{2k-1}{k}\left(R_{n-k-1}^{(k-1)}+\sum_{l=1}^{2k-n}S_{l+n-k-1}^{(k-1)}(-1)^{l-1}(l-1)!\right)+\frac{1}{k}\left(R_{n-k-2}^{(k-1)}+\sum_{l=1}^{2k-n+1}S_{l+n-k-2}^{(k-1)}(-1)^{l-1}(l-1)!\right)$$
   $$-\frac{k-1}{k}\left(R_{n-k-1}^{(k-2)}+\sum_{l=1}^{2k-n-1}S_{l+n-k-1}^{(k-2)}(-1)^{l-1}(l-1)!\right)=0.$$
   If $n=k+1$, the left hand side of (4.4) is equal to
    $$R_{0}^{(k)}+S_{k}^{(k)}(-1)^{k-1}(k-1)!+S_{k-1}^{(k)}(-1)^{k-2}(k-2)!+\sum_{l=1}^{k-2}S_{l}^{(k)}(-1)^{l-1}(l-1)!$$
    $$=\frac{2k-1}{k}R_{0}^{(k-1)}-\frac{k-1}{k}R_{0}^{(k-2)}+\frac{1}{k}S_{k-1}^{(k-1)}(-1)^{k-1}(k-1)!+\left(\frac{2k-1}{k}S_{k-1}^{(k-1)}+\frac{1}{k}S_{k-2}^{(k-1)}\right)(-1)^{k-2}(k-2)!$$
    $$+\sum_{l=1}^{k-2}\left(\frac{2k-1}{k}S_{l}^{(k-1)}+\frac{1}{k}S_{l-1}^{(k-1)}-\frac{k-1}{k}S_{l}^{(k-2)}\right)(-1)^{l-1}(l-1)!.$$
    Hence, that is equal to $$\frac{2k-1}{k}\left(R_{0}^{(k-1)}+\sum_{l=1}^{k-1}S_{l}^{(k-1)}(-1)^{l-1}(l-1)!\right)-\frac{k-1}{k}\left(R_{0}^{(k-2)}+\sum_{l=1}^{k-2}S_{l}^{(k-2)}(-1)^{l-1}(l-1)!\right)$$
   $$+\frac{1}{k}\sum_{l=1}^{k}S_{l-1}^{(k-1)}(-1)^{l-1}(l-1)!=0.$$
These complete a proof of (4.4) for $k$. \\
\ \ \ \ Next we will show that (4.5) and (4.6). We rewrite (4.5) and (4.6) the followings, respectively:
$$ \sum_{l=0}^{k}S_{l}^{(k)}(-1)^{l+k-n}(l+k-n)!=0\ \ \ \ (n=1,2,\cdots,k),\eqno(4.7)$$
$$\sum_{l=0}^{k}S_{l}^{(k)}(-1)^{l+k}(l+k)!=k!.\eqno(4.8)$$
Similarly, we rewrite the last four assumptions to followings:
$$ \begin{cases}
\displaystyle\sum_{l=0}^{k-1}S_{l}^{(k-1)}(-1)^{l+k-1-n}(l+k-1-n)!=0,\ \ \ \ (n=k-1,k-2,\cdots,1)\\ \\
\displaystyle\sum_{l=0}^{k-1}S_{l}^{(k-1)}(-1)^{l+k-1}(l+k-1)!=(k-1)!,
\end{cases} \eqno(4.9)$$
$$\begin{cases}
\displaystyle\sum_{l=0}^{k-2}S_{l}^{(k-2)}(-1)^{l+k-2-n}(l+k-2-n)!=0,\ \ \ \ (n=k-2,k-3,\cdots,1) \\ \\
\displaystyle\sum_{l=0}^{k-2}S_{l}^{(k-2)}(-1)^{l+k-2}(l+k-2)!=(k-2)!.
\end{cases} \eqno(4.10)$$
We consider the matrix representation of (4.9):
$$\left(
\begin{array}{ccccc}
     (-1)^{0}\cdot 0! & (-1)^{1}\cdot 1! & \cdots & (-1)^{k-2}\cdot (k-2)! & (-1)^{k-1}\cdot (k-1)!   \vspace{2mm} \\
     (-1)^{1}\cdot 1!  &  (-1)^{2}\cdot 2! & \cdots &  (-1)^{k-1}\cdot (k-1)! & (-1)^{k}\cdot k!  \vspace{2mm} \\
     \vdots & \vdots & \ddots & \vdots & \vdots \\ \\
     (-1)^{k-2}\cdot (k-2)! & (-1)^{k-1}\cdot (k-1)! & \cdots & (-1)^{2k-4}\cdot (2k-4)! & (-1)^{2k-3}\cdot (2k-3)! \vspace{2mm} \\
     (-1)^{k-1}\cdot (k-1)! & (-1)^{k}\cdot k! & \cdots & (-1)^{2k-3}\cdot (2k-3)! & (-1)^{2k-2}\cdot (2k-2)!
    \end{array}
    \right)
    \left(\begin{array}{ccccc}
    S_{0}^{(k-1)}   \vspace{2mm} \\
    S_{1}^{(k-1)}  \vspace{2mm} \\
    \vdots \vspace{2mm} \\
    S_{k-2}^{(k-1)} \vspace{2mm} \\
    S_{k-1}^{(k-1)}
    \end{array}\right)$$
    $$=\left(\begin{array}{ccccc}
    0 \\
    0 \\
    \vdots \\
    0 \\
    (k-1)!
    \end{array}\right).$$
By our definition of $A^{(k-1)}$, we obtain that
    $$A^{(k-1)}\left(
    \begin{array}{ccccc}
    S_{0}^{(k-1)}\vspace{2mm} \\
    S_{1}^{(k-1)}\vspace{2mm} \\
    \vdots \vspace{2mm} \\
    S_{k-2}^{(k-1)} \vspace{2mm} \\ 
    S_{k-1}^{(k-1)}
    \end{array}\right)=\left(
    \begin{array}{ccccc}
    0\vspace{2mm} \\
    0\vspace{2mm} \\
    \vdots \vspace{2mm} \\
    0 \vspace{2mm} \\
   (k-1)! \end{array}\right).$$
   Similarly, we obtain that
   $$A^{(k-2)}\left(
    \begin{array}{ccccc}
    S_{0}^{(k-2)}\vspace{2mm} \\
    S_{1}^{(k-2)}\vspace{2mm} \\ 
    \vdots \vspace{2mm} \\
    S_{k-3}^{(k-2)} \vspace{2mm} \\
    S_{k-2}^{(k-2)}
    \end{array}\right)=\left(
    \begin{array}{ccccc}
    0\vspace{2mm} \\
    0\vspace{2mm} \\ 
    \vdots \vspace{2mm} \\
    0 \vspace{2mm} \\ 
   (k-2)! \end{array}\right).$$
   Therefore we have that
    $$S_{l}^{(k-1)}=(-1)^{k+l+1}(k-1)!\frac{\mathrm{det}A_{k,l+1}^{(k-1)}}{\mathrm{det}A^{(k-1)}}=\frac{{}_{k-1} C _l}{l!}\ \ \ \ (l=0,1,\cdots,k-1), \eqno(4.11)$$
   $$S_{l}^{(k-2)}=(-1)^{k+l}(k-2)!\frac{\mathrm{det}A_{k-1,l+1}^{(k-2)}}{\mathrm{det}A^{(k-2)}}=\frac{{}_{k-2} C _l}{l!} \ \ \ \ (l=0,1,\cdots,k-2),\eqno(4.12)$$
   by Cramer's rule and Theorem 3.1. Now we will show (4.8). We rewrite the left hand side of (4.8) by the recurrence relations:
   $$\frac{1}{k}S_{k-1}^{(k-1)}(-1)^{2k}(2k)!+\left(\frac{2k-1}{k}S_{k-1}^{(k-1)}+\frac{1}{k}S_{k-2}^{(k-1)}\right)(-1)^{2k-1}(2k-1)!$$
   $$+\sum_{l=1}^{k-2} \left(\frac{2k-1}{k}S_{l}^{(k-1)}+\frac{1}{k}S_{l-1}^{(k-1)}-\frac{k-1}{k}S_{l}^{(k-2)}\right)(-1)^{l+k}(l+k)!$$
   $$+\left(\frac{2k-1}{k}S_{0}^{(k-1)}-\frac{k-1}{k}S_{0}^{(k-2)}\right)(-1)^{k}k!$$
     $$=\frac{2k-1}{k}\sum_{l=0}^{k-1}S_{l}^{(k-1)}(-1)^{k+l}(k+l)!+\frac{1}{k}\sum_{l=0}^{k-1}S_{l}^{(k-1)}(-1)^{k+l+1}(k+l+1)!$$
     $$-\frac{k-1}{k}\sum_{l=0}^{k-2}S_{l}^{(k-2)}(-1)^{k+l}(k+l)!.\eqno(4.13)$$
     Then substituting (4.11) and (4.12) into (4.13), we have that the left hand side of (4.8) is equal to
    $$\frac{2k-1}{k}\sum_{l=0}^{k-1} {}_{k-1} C _{l}(-1)^{k+l}(k+l)\cdots (l+1)+\frac{1}{k}\sum_{l=0}^{k-1} {}_{k-1} C _l(-1)^{k+l+1}(k+l+1)\cdots (l+1)$$
    $$-\frac{k-1}{k}\sum_{l=0}^{k-2}{}_{k-2} C _l (-1)^{k+l}(k+l)\cdots(l+1). \eqno(4.14)$$
    By considering the $k$ th derivative of the both sides of the identity $(-1)^{k}x^{k}(1-x)^{k-1}=\displaystyle\sum_{l=0}^{k-1} {}_{k-1} C _{l}(-1)^{l+k}x^{l+k},$ we have that
  $$(-1)^{k}\sum_{i=0}^{k} {}_{k} C _i (x^{k})^{(i)}((1-x)^{k-1})^{(k-i)} =\sum_{l=0}^{k-1}{}_{k-1} C _l  (-1)^{l+k} (l+k)\cdots (l+1)x^{l}.$$
  If $i=0$ or $2\le i\le k$, then we obtain that
  $$((1-x)^{k-1})^{(k-i)}|_{x=1}=0.$$
  Therefore we get that
  $$\sum_{l=0}^{k-1}{}_{k-1} C _l  (-1)^{l+k} (l+k)\cdots (l+1)=(-1)^{k}k\cdot k(-1)^{k-1}(k-1)!=-k\cdot k!.\eqno(4.15)$$
 Similarly we have that
 $$\sum_{l=0}^{k-1}{}_{k-1} C _l  (-1)^{l+k+1} (l+k+1)\cdots (l+1)=\frac{(k+1)k(k+1)!}{2},$$
  $$\sum_{l=0}^{k-2}{}_{k-2} C _l  (-1)^{l+k} (l+k)\cdots (l+1)=\frac{k(k-1)k!}{2}.$$
  Therefore substituting them into (4.14), we get that the left hand side of (4.8) is equal to
  $$-\frac{2k-1}{k}k\cdot k!+\frac{1}{k}\cdot \frac{(k+1)k(k+1)!}{2}-\frac{k-1}{k}\cdot \frac{k(k-1)k!}{2}=k!\left(\frac{k^{2}+2k+1}{2}-2k+1-\frac{k^{2}-2k+1}{2}\right)=k!.$$
  This completes a proof of (4.8).\\
 \ \ \ \ We rewrite the left hand side of (4.7) by the recurrence relations:
 $$\frac{2k-1}{k}\sum_{l=0}^{k-1}S_{l}^{(k-1)}(-1)^{k+l-n}(k+l-n)!+\frac{1}{k}\sum_{l=0}^{k-1}S_{l}^{(k-1)}(-1)^{k+l-n+1}(k+l-n+1)!$$
 $$-\frac{k-1}{k}\sum_{l=0}^{k-2}S_{l}^{(k-2)}(-1)^{k+l-n}(k+l-n)!.\eqno(4.16)$$
 If $n=2$, we get that this is equal to
 $$\frac{2k-1}{k}\cdot0+\frac{1}{k}(k-1)!-\frac{k-1}{k}(k-2)!=0$$
 by (4.9) and (4.10). If $3\le n \le k$, we have that (4.16) is equal to
 $$\frac{2k-1}{k}\cdot0+\frac{1}{k}\cdot0-\frac{k-1}{k}\cdot0=0$$
 by (4.9) and (4.10). If $n=1$, we have that (4.16) is equal to
 $$\frac{2k-1}{k}(k-1)!+\frac{1}{k}\sum_{l=0}^{k-1}S_{l}^{(k-1)}(-1)^{k+l}(k+l)!-\frac{k-1}{k}\sum_{l=0}^{k-2}S_{l}^{(k-2)}(-1)^{k+l-1}(k+l-1)!\eqno(4.17)$$
 by (4.9), (4.11) and (4.12). By the same argument to get (4.15), we see that $$\displaystyle\sum_{l=0}^{k-2}{}_{k-2} C _l  (-1)^{l+k-1} (l+k-1)\cdots (l+1)=-(k-1)\cdot (k-1)!.$$ Then
substituting them into (4.17), we have that
$$\frac{2k-1}{k}(k-1)!-\frac{k\cdot k!}{k}+\frac{(k-1)^{2}(k-1)!}{k}=\frac{(k-1)!}{k}(2k-1-k^{2}+(k-1)^{2})=0.$$
These complete a proof of (4.7). Therefore, these complete a proof of the assertion for (4.3).\\
\ \ Next we will show (4.2). The left hand side of (4.2) is equal to 
 $$-kx^{k}\left(r_{k}(x)+s_{k}(x)\sum_{l=1}^{2k}(-1)^{l-1}(l-1)!\frac{1}{x^{l}}\right)+kx^{k}\left(r_{k-1}(x)+s_{k-1}(x)\sum_{l=1}^{2k}(-1)^{l-1}(l-1)!\frac{1}{x^{l}}\right).$$
 Here, using (4.3), we obtain that
  $$x^{k}\left(r_{k}(x)+s_{k}(x)\displaystyle\sum_{l=1}^{2k}(-1)^{l-1}(l-1)!\displaystyle\frac{1}{x^{l}}\right)$$
 $$=\frac{1}{x}\left\{x^{k+1}r_{k}(x)+x^{k+1}s_{k}(x)\left(\sum_{l=1}^{2k+1}(-1)^{l-1}(l-1)!\frac{1}{x^{l}}-(-1)^{2k}(2k)!\frac{1}{x^{2k+1}}\right)\right\}$$
   $$=\frac{1}{x}\left(x^{k+1}r_{k}(x)+x^{k+1}s_{k}(x)\sum_{l=1}^{2k+1}(-1)^{l-1}(l-1)!\frac{1}{x^{l}}-(2k)!\frac{s_{k}(x)}{x^{k}}\right)\equiv 0\ \ \ \ (\mathrm{mod}\ I).$$
   On the other hand, using (4.3) for $k-1$, we obtain that
   $$x^{k}\left(r_{k-1}(x)+s_{k-1}(x)\displaystyle\sum_{l=1}^{2k}(-1)^{l-1}(l-1)!\displaystyle\frac{1}{x^{l}}\right)$$
   $$=x^{k}r_{k-1}(x)+x^{k}s_{k-1}(x)\left((-1)^{2k-1}(2k-1)!\frac{1}{x^{2k}}+\sum_{l=1}^{2k-1}(-1)^{l-1}(l-1)!\frac{1}{x^{l}}\right)$$
   $$=x^{k}r_{k-1}(x)+x^{k}s_{k-1}(x)\sum_{l=1}^{2k-1}(-1)^{l-1}(l-1)!\frac{1}{x^{l}}-(2k-1)!\frac{s_{k-1}(x)}{x^{k}}\equiv (k-1)!\ \ \ \ (\mathrm{mod}\ I)$$
   Therefore, We have that
   $$-kx^{k}(r_{k}(x)-r_{k-1}(x))-kx^{k}(s_{k}(x)-s_{k-1}(x))\left(\displaystyle{\sum_{l=1}^{2k} (-1)^{l-1}(l-1)!\displaystyle\frac{1}{x^{l}}}\right)\equiv k!\ \ \ \ (\mathrm{mod}\ I).$$
  This completes a proof of (4.2). Hence we complete a proof of that
  $$F_{2k+1}(x)\sim x \ \ \ \ (x\to \infty).$$ 
   \ \ By our definition and assumption: $F_{2k+1}(x)\sim x$\ \ $(x\to \infty)$, $F_{2k+2}(x)=\displaystyle\frac{1}{F_{2k+1}-x}$. Then we have that
   $$F_{2k+2}(x)=\left(\begin{array}{cc}
   0 & 1 \\
   1 & -x \\ 
    \end{array}\right)F_{2k+1}(x)=\left(\begin{array}{cc}
   0 & 1 \\
   1 & -x \\ 
    \end{array}\right)D_{k}\frac{1}{F(x)}.$$
 By
  \begin{eqnarray*}\left(\begin{array}{cc}
   0 & 1 \\
   1 & -x \\ 
    \end{array}\right)D_{k}&=& \left(\begin{array}{cc}
   0 & 1 \\
   1 & -x \\ 
    \end{array}\right)
    \left(
    \begin{array}{cc}
     -k(r_{k}(x)-r_{k-1}(x)) &  -k(s_{k}(x)-s_{k-1}(x))  \\
      r_{k}(x) & s_{k}(x) 
    \end{array}
  \right)\\ \\
  &=&\left(\begin{array}{cc}
   r_{k}(x) & s_{k}(x) \\
   -(k+x)r_{k}(x)+kr_{k-1}(x) &   -(k+x)s_{k}(x)+ks_{k-1}(x)
    \end{array}\right),
  \end{eqnarray*}
  we obtain that
   $$F_{2k+2}(x)=\frac{r_{k}(x)+s_{k}(x)F(x)}{-(k+x)r_{k}(x)+kr_{k-1}(x)-\{(k+x)s_{k}(x)-ks_{k-1}(x)\}F(x)}.\eqno(4.18)$$
   Dividing both sides of (4.18) by $\displaystyle\frac{1}{k+1}$, and using the identity obtained by setting $n=2k+2$ (resp. $n=2k+1$) in the denominator (resp. the numerator), we have that
   $$\frac{F_{2k+2}(x)}{\displaystyle\frac{1}{k+1}}$$
  $$=\frac{(k+1)x^{k+1}r_{k}(x)+(k+1)x^{k+1}s_{k}(x)\left(\displaystyle\sum_{l=1}^{2k+1} (-1)^{l-1}(l-1)!\displaystyle\frac{1}{x^{l}}+\mathcal{O}\left(\displaystyle\frac{1}{x^{2k+2}}\right)\right)}{-x^{k+1}\left\{(k+x)r_{k}(x)-kr_{k-1}(x)+\left((k+x)s_{k}(x)-ks_{k-1}(x)\right)\left(\displaystyle\sum_{l=1}^{2k+2} (-1)^{l-1}(l-1)!\displaystyle\frac{1}{x^{l}}+\mathcal{O}\left(\displaystyle\frac{1}{x^{2k+3}}\right)\right)\right\}}.$$
  Therefore to show that $F_{2k+2}(x)\sim \displaystyle\frac{1}{k+1}\ \ (x\to \infty)$, it is sufficient to prove tha following two congruence relations modulo $I=\displaystyle\frac{1}{x}\mathbb{Q}\left[\displaystyle\frac{1}{x}\right]$:
  $$ (k+1)x^{k+1}r_{k}(x)+(k+1)x^{k+1}s_{k}(x)\left(\displaystyle\sum_{l=1}^{2k+1} (-1)^{l-1}(l-1)!\displaystyle\frac{1}{x^{l}}\right)\equiv (k+1)! \ \ (\mathrm{mod}\ I),\eqno(4.19)$$
  $$-x^{k+1}\left\{(k+x)r_{k}(x)-kr_{k-1}(x)+\left((k+x)s_{k}(x)-ks_{k-1}(x)\right)\left(\displaystyle\sum_{l=1}^{2k+2} (-1)^{l-1}(l-1)!\displaystyle\frac{1}{x^{l}}\right)\right\}$$
  $$\equiv (k+1)!\ \ (\mathrm{mod}\ I).\eqno(4.20)$$
  By (4.3), we see that
  $$(k+1)\left(x^{k+1}r_{k}(x)+x^{k+1}s_{k}(x)\left(\displaystyle{\sum_{l=1}^{2k+1} (-1)^{l-1}(l-1)!\displaystyle\frac{1}{x^{l}}}\right)\right)\equiv (k+1)!\ \ \ \ (\mathrm{mod}\ I).$$
  Therefore this completes a proof of (4.19). Next we will show (4.20). The left side of (4.20) is equal to
  $$-k\left(x^{k+1}r_{k}(x)+x^{k+1}s_{k}(x)\sum_{l=1}^{2k+2} (-1)^{l-1}(l-1)!\displaystyle\frac{1}{x^{l}}\right)-x\left(x^{k+1}r_{k}(x)+x^{k+1}s_{k}(x)\sum_{l=1}^{2k+2} (-1)^{l-1}(l-1)!\displaystyle\frac{1}{x^{l}}\right)$$
  $$+kx\left(x^{k}r_{k-1}(x)
  +x^{k}s_{k-1}(x)\sum_{l=1}^{2k+2} (-1)^{l-1}(l-1)!\displaystyle\frac{1}{x^{l}}\right).\eqno(4.21)$$
  Here, the coefficient of $x^{-1}$ of $x^{k+1}s_{k}(x)\displaystyle\sum_{l=1}^{2k+2} (-1)^{l-1}(l-1)!\displaystyle\frac{1}{x^{l}}$ is
    $$\sum_{l=0}^{k}S_{l}^{(k)}(-1)^{k+l+1}(k+l+1)!.$$
   Since we have already proved (4.7) and (4.8), we obtain that 
   $$S_{l}^{(k)}=(-1)^{k+l+2}k!\displaystyle\frac{\mathrm{det}A_{k+1,l+1}^{(k)}}{\mathrm{det}A^{(k)}}=\displaystyle\frac{ {}_{k} C _{l}}{l!}\ \ \ \ (l=0,1,\cdots,k).\eqno(4.22)$$
   Therefore we see that
    \begin{eqnarray*}
  \sum_{l=0}^{k}S_{l}^{(k)}(-1)^{k+l+1}(k+l+1)!&=&\sum_{l=0}^{k}\displaystyle\frac{ {}_{k} C _{l}}{l!}(-1)^{k+l+1}(k+l+1)!\\
  &=&\sum_{l=0}^{k} {}_{k} C _{l} (-1)^{k+l+1}(k+l+1)\cdots (l+1)\\
  &=&-(k+1)(k+1)!.
  \end{eqnarray*}
  Then we get that $x^{k+1}r_{k}(x)+x^{k+1}s_{k}(x)\displaystyle\sum_{l=1}^{2k+2} (-1)^{l-1}(l-1)!\displaystyle\frac{1}{x^{l}}$ is congruent to
  $$k!-\frac{(k+1)(k+1)!}{x}$$
  modulo $I^{'}:=\displaystyle\frac{1}{x^{2}}\mathbb{Q}\left[\displaystyle\frac{1}{x}\right]$. Similarly, we have that 
  $$x^{k}r_{k-1}(x)+x^{k}s_{k-1}(x)\displaystyle\sum_{l=1}^{2k+2} (-1)^{l-1}(l-1)!\displaystyle\frac{1}{x^{l}}\equiv (k-1)!-\frac{k k!}{x}\ \ \ \ (\mathrm{mod}I^{'}).$$
 Therefore, substituting them into (4.21), we have that (4.21) is congruent to
 $$-k\left(k!-\frac{(k+1)(k+1)!}{x}\right)-x\left(k!-\frac{(k+1)(k+1)!}{x}\right)+kx\left((k-1)!-\frac{k k!}{x}\right)$$
\begin{eqnarray*}
&\equiv& -kk!-xk!+(k+1)(k+1)!+xk!-k^{2}k!\\
&\equiv& (k+1)(k+1)!-kk!(k+1)\\
&\equiv& (k+1)!(k+1-k)\\
&\equiv& (k+1)!\ \ \ \ (\mathrm{mod}\ I).
\end{eqnarray*}
Therefore, this completes a proof of (4.20). Hence we complete a proof of that
$$F_{2k+2}(x)\sim \frac{1}{k+1}\ \ \ \ (x\to \infty).$$
By induction we complete a proof of Theorem 2.1.\\
\section{A concrete representation of $P_{n}(x)$ and $Q_{n}(x)$}
\ \ \ \ Finally, we give an explicit expression of $P_{n}(x)$ and $Q_{n}(x)$. By (1.4), we obtain the following recurrence relations:
$$ \begin{cases}
P_{-1}(x)=1,\ \ Q_{-1}(x)=0,\ \ P_{0}(x)=0,\ \ Q_{0}(x)=1,\\
P_{n}(x)=m_{n}P_{n-1}(x)+P_{n-2}(x),\ \ Q_{n}(x)=m_{n}Q_{n-1}(x)+Q_{n-2}(x).
\end{cases}
$$
By solving these recurrence relations, we can state the following theorem.
\begin{thm}
For any integer $n$ greater than or equal to 1, it holds that
$$\begin{cases}
P_{2n-1}(x)=\displaystyle\sum_{k=0}^{n-1} \displaystyle\sum_{l=0}^{n-k-1} \displaystyle\frac{{}_{n} C _{l+k+1}}{(l+k)\cdots(l+1)} (-1)^{l}x^{k}, \vspace{5mm}\\
P_{2n}(x)=\displaystyle\sum_{k=0}^{n-1} \displaystyle\sum_{l=0}^{n-k-1}\displaystyle\frac{{}_{n} C _{l+k+1}}{(l+k+1)\cdots(l+1)} (-1)^{l}x^{k}
\end{cases}\ \ \ \ \begin{cases}
Q_{2n-1}(x)=\displaystyle\sum_{k=0}^{n-1}\displaystyle\frac{ {}_{n} C _{k+1}}{k!}x^{k+1}, \vspace{5mm}\\
Q_{2n}(x)=\displaystyle\sum_{k=0}^{n} \displaystyle\frac{ {}_{n} C _{k}}{k!} x^{k}.
\end{cases}$$
\end{thm}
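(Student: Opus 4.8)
The plan is to prove all four identities simultaneously by strong induction on $n$, using the second-order recurrences in the parity-split form
$$P_{2n-1}=xP_{2n-2}+P_{2n-3},\qquad P_{2n}=\tfrac{1}{n}P_{2n-1}+P_{2n-2},$$
$$Q_{2n-1}=xQ_{2n-2}+Q_{2n-3},\qquad Q_{2n}=\tfrac{1}{n}Q_{2n-1}+Q_{2n-2},$$
which follow from the stated recurrences together with $m_{2n-1}=x$ and $m_{2n}=\tfrac{2}{2n}=\tfrac{1}{n}$. For the base case $n=1$ I would compute directly from the initial data, obtaining $P_1=1$, $P_2=1$, $Q_1=x$, $Q_2=1+x$, and check that each matches the claimed formula (reading the empty product $(l+k)\cdots(l+1)$ for $k=0$ as $1$). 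For the inductive step I assume the four formulas at level $n-1$, i.e.\ for $P_{2n-2}=P_{2(n-1)}$, $P_{2n-3}=P_{2(n-1)-1}$ and likewise for $Q$, substitute them into the four recurrences, and compare the coefficient of each power $x^{k}$.

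For the two $Q$-identities the computation is short. Collecting the coefficient of $x^{k+1}$ in $xQ_{2n-2}+Q_{2n-3}$ produces ${}_{n-1}C_{k}+{}_{n-1}C_{k+1}$, which equals ${}_{n}C_{k+1}$ by Pascal's rule and yields $Q_{2n-1}$; for $Q_{2n}=\tfrac{1}{n}Q_{2n-1}+Q_{2n-2}$ I would first rewrite the $\tfrac{1}{n}$-term using the absorption identity $\tfrac{1}{n}\,{}_{n}C_{j}=\tfrac{1}{j}\,{}_{n-1}C_{j-1}$ and then close up again with Pascal's rule.

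The essential work lies in the two $P$-identities, whose double sums carry two different denominators — a product of $k$ consecutive integers for the odd convergents and of $k+1$ for the even ones. Multiplying $P_{2n-2}$ by $x$ shifts the power and, after re-indexing, leaves the $k$-factor denominator $(l+k)\cdots(l+1)$, which already matches that of $P_{2n-3}$; hence in $P_{2n-1}=xP_{2n-2}+P_{2n-3}$ all terms share one denominator, and the coefficient of $x^{k}$ collapses, via ${}_{n}C_{l+k+1}={}_{n-1}C_{l+k}+{}_{n-1}C_{l+k+1}$, to exactly the asserted sum. In $P_{2n}=\tfrac{1}{n}P_{2n-1}+P_{2n-2}$ the absorption identity converts $\tfrac{1}{n}\,{}_{n}C_{l+k+1}$ into $\tfrac{1}{l+k+1}\,{}_{n-1}C_{l+k}$, lengthening the denominator from $k$ to $k+1$ factors so that it aligns with those of $P_{2n}$ and $P_{2n-2}$, after which Pascal's rule again finishes the match.

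The main subtlety in both $P$-steps will be that the inner summation ranges do not coincide after substitution: the combined sum runs $l=0,\dots,n-k-1$, whereas the level $(n-1)$ data only runs to $n-k-2$. The reconciliation is that the surplus boundary term at $l=n-k-1$ always carries the factor ${}_{n-1}C_{n}$, which vanishes; this is the one place where the exact upper limits of the sums are used. Keeping this index bookkeeping straight will be the principal difficulty, as the remaining manipulations are elementary identities among factorials and binomial coefficients.
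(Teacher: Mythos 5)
Your proposal is correct, and it takes a genuinely different route from the paper's. The paper never verifies the closed forms against the recurrence directly: it packages the pairs $(Q_{2n+1},Q_{2n})$ and $(P_{2n+1},P_{2n})$ as a matrix product $M_{n}$ applied to the initial vectors, identifies the entries of $M_{n}$ with the polynomials $r_{n}(x)$, $s_{n}(x)$ of Section~4 (so that $Q_{2n}=s_{n}(x)$ and $P_{2n}=-r_{n}(x)$), and then reads the coefficients off from the linear systems $A^{(k)}(S^{(k)}_{0},\dots,S^{(k)}_{k})^{T}=(0,\dots,0,k!)^{T}$ via Cramer's rule and Theorem~3.1, together with relation (4.4) for the $R^{(n)}_{k}$. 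Your induction using Pascal's rule and the absorption identity $\frac{1}{n}\,{}_{n} C _{j}=\frac{1}{j}\,{}_{n-1} C _{j-1}$ bypasses all of that determinant machinery and is fully self-contained; what it gives up is the structural payoff of the paper's argument, namely that the convergents coincide with the $r_{n},s_{n}$ already controlling the asymptotic expansion of $F(x)$ in the proof of Theorem~2.1. One small point to repair when you write it out: in the step $P_{2n-1}=xP_{2n-2}+P_{2n-3}$ the constant term ($k=0$) is not settled by the vanishing boundary term ${}_{n-1} C _{n}=0$ alone, since $xP_{2n-2}$ contributes nothing to $x^{0}$ while Pascal's rule produces the additional block $\sum_{l=0}^{n-1}(-1)^{l}\,{}_{n-1} C _{l}$; you need the (trivial) identity $(1-1)^{n-1}=0$ to dispose of it. With that noted, every coefficient comparison you describe does close up as claimed, and ordinary (rather than strong) induction on $n$ suffices because each level uses only level $n-1$.
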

\begin{proof}
We consider the following recurrence relations:
$$ \begin{cases}
Q_{2n+1}(x)=xQ_{2n}(x)+Q_{2n-1}(x),\vspace{3mm}  \\ 
Q_{2n}(x)=\displaystyle\frac{1}{n}Q_{2n-1}(x)+Q_{2n-2}(x),
\end{cases}$$
with initial conditions: $(Q_{0}(x), Q_{1}(x))=(1, x)$. Substituting, we obtain that 
$$Q_{2n+1}(x)=x\left(\displaystyle\frac{1}{n}Q_{2n-1}(x)+Q_{2n-2}(x)\right) +Q_{2n-1}(x)=\left(\displaystyle\frac{x}{n}+1\right)Q_{2n-1}(x)+xQ_{2n-2}(x).$$
Hence we can rewrite as follows:
$$ \begin{cases}
Q_{2n+1}(x)=\left(\displaystyle\frac{x}{n}+1\right)Q_{2n-1}(x)+xQ_{2n-2}(x),\vspace{3mm}  \\ 
Q_{2n}(x)=\displaystyle\frac{1}{n}Q_{2n-1}(x)+Q_{2n-2}(x).
\end{cases}$$
Using a matrix representation, we get that
$$\left(
\begin{array}{cc}
Q_{2n+1}(x) \vspace{9mm}\\
Q_{2n}(x)
\end{array}\right)=
\left(
\begin{array}{cc}
\displaystyle\frac{x}{n}+1 & x \vspace{4mm}\\
\displaystyle\frac{1}{n} & 1
\end{array}
\right)
\left(
\begin{array}{cc}
Q_{2n-1}(x) \vspace{9mm} \\
Q_{2n-2}(x)
\end{array}
\right).
$$
Therefore, we obtain that
\begin{eqnarray*}
  \left(\begin{array}{cc}
Q_{2n+1}(x) \vspace{9mm}\\
Q_{2n}(x)
\end{array}\right)&=&\left(
\begin{array}{cc}
\displaystyle\frac{x}{n}+1 & x \vspace{4mm}\\
\displaystyle\frac{1}{n} & 1
\end{array}
\right)\cdots \left(
    \begin{array}{cc}
     \displaystyle\frac{x}{2}+1 & x   \vspace{4mm} \\
      \displaystyle\frac{1}{2} & 1 
    \end{array}
  \right)\left(
    \begin{array}{cc}
     x+1 &  x   \vspace{9mm} \\
      1 &  1   
    \end{array}
  \right)\left(\begin{array}{cc}
Q_{1}(x) \vspace{9mm}\\
Q_{0}(x)
\end{array}\right)
  \\ \\ \\
  &=& \left(
\begin{array}{cc}
\displaystyle\frac{x}{n}+1 & x \vspace{4mm}\\
\displaystyle\frac{1}{n} & 1
\end{array}
\right)\cdots \left(
    \begin{array}{cc}
     \displaystyle\frac{x}{2}+1 & x   \vspace{4mm} \\
      \displaystyle\frac{1}{2} & 1 
    \end{array}
  \right)\left(
    \begin{array}{cc}
     x+1 &  x   \vspace{9mm} \\
      1 &  1   
    \end{array}
  \right)\left(\begin{array}{cc}
x \vspace{9mm}\\
1
\end{array}\right).
  \end{eqnarray*}
For any integer $n\ge 1$, we put 
$$M_{n}:=\left(
\begin{array}{cc}
\displaystyle\frac{x}{n}+1 & x \vspace{4mm}\\
\displaystyle\frac{1}{n} & 1
\end{array}
\right)\cdots \left(
    \begin{array}{cc}
     \displaystyle\frac{x}{2}+1 & x   \vspace{4mm} \\
      \displaystyle\frac{1}{2} & 1 
    \end{array}
  \right)\left(
    \begin{array}{cc}
     x+1 &  x   \vspace{9mm} \\
      1 &  1   
    \end{array}
  \right)=:\left(
    \begin{array}{cc}
     f_{n}(x) & g_{n}(x)   \vspace{9mm}\\
      h_{n}(x) & i_{n}(x) 
    \end{array}
  \right),$$
   and 
   $$M_{0}:=\left(
    \begin{array}{cc}
     1 \ \ & \ \ 0   \vspace{9mm}\\
      0 \ \ &\ \  1 
    \end{array}
  \right)=:\left(
    \begin{array}{cc}
     f_{0}(x) & g_{0}(x)   \vspace{9mm}\\
      h_{0}(x) & i_{0}(x) 
    \end{array}
  \right).$$
  The same argument for $D_{k}$ in Section4 imply that
  $$M_{n}=\left(
    \begin{array}{cc}
     (x+n)h_{n}(x)-nh_{n-1}(x) &  (x+n)i_{n}(x)-ni_{n-1}(x)  \vspace{9mm}\\
      h_{n}(x) & i_{n}(x) 
    \end{array}
  \right)$$
  and
  $$ \begin{cases}
    h_{n+1}(x)=\displaystyle\frac{x+2n+1}{n+1}h_{n}(x)-\displaystyle\frac{n}{n+1}h_{n-1}(x), \\ \\
   i_{n+1}(x)=\displaystyle\frac{x+2n+1}{n+1}i_{n}(x)-\displaystyle\frac{n}{n+1}i_{n-1}(x),
  \end{cases}$$
  with initial conditions: $(h_{0}(x), h_{1}(x))=(0, 1)$, $(i_{0}(x), i_{1}(x))=(1, 1)$.
  Hence we obtain that
  \begin{eqnarray*}
  M_{n}\left(
    \begin{array}{cc}
    x  \vspace{9mm}\\
     1
    \end{array}
  \right) &=& \left(
    \begin{array}{cc}
    x(x+n)h_{n}(x)-nxh_{n-1}(x) + (x+n)i_{n}(x)-ni_{n-1}(x) \vspace{9mm}\\
      xh_{n}(x)+i_{n}(x) 
    \end{array}
  \right)\\ \\ \\ \\
  &=& \left(
    \begin{array}{cc}
    (x+n)(xh_{n}(x)+i_{n}(x))-n(xh_{n-1}(x)+i_{n-1}(x)) \vspace{9mm}\\
      xh_{n}(x)+i_{n}(x) 
    \end{array}
  \right).
 \end{eqnarray*}
   We set $t_{n}(x)=xh_{n}(x)+i_{n}(x)$. Then $t_{n}(x)$ satisfy the following reccurence relation:
  $$t_{n+1}(x)=\displaystyle\frac{x+2n+1}{n+1}t_{n}(x)-\displaystyle\frac{n}{n+1}t_{n-1}(x),$$
   with initial conditions: $(t_{0}(x),t_{1}(x))=(1,x+1)$. These imply $t_{n}(x)=s_{n}(x)$. Therefore, we have that
   $$\left(\begin{array}{cc}
Q_{2n+1}(x) \vspace{4mm}\\
Q_{2n}(x)
\end{array}\right)=\left(
\begin{array}{cc}
(x+n)s_{n}(x)-ns_{n-1}(x) \vspace{4mm}\\
s_{n}(x)
\end{array}
\right).$$
By (4.22), we obtain that
$$s_{n}(x)=\sum_{k=0}^{n} S_{k}^{(n)}x^{k} =\displaystyle\sum_{k=0}^{n} \displaystyle\frac{ {}_{n} C _{k}}{k!} x^{k}.$$
Therefore, we get that
   \begin{eqnarray*}
  (x+n)s_{n}(x)-ns_{n-1}(x)
  &=&\sum_{k=0}^{n} \frac{ {}_{n} C _{k}}{k!} x^{k+1}+n\left(\sum_{k=0}^{n} \frac{ {}_{n} C _{k}}{k!} x^{k}-\sum_{k=0}^{n-1} \frac{ {}_{n-1} C _{k}}{k!} x^{k}\right)\\ 
   &=& \sum_{k=0}^{n} \frac{ {}_{n} C _{k}}{k!} x^{k+1}+n\left(\displaystyle\frac{1}{n!}x^{n}+\sum_{k=1}^{n-1} \frac{ {}_{n-1} C _{k-1}}{k!} x^{k}\right)\\ 
   &=& \displaystyle\frac{1}{n!}x^{n+1}+\displaystyle\frac{n}{(n-1)!}x^{n}+\sum_{k=0}^{n-2} \frac{ {}_{n} C _{k}}{k!} x^{k+1}+\displaystyle\frac{1}{(n-1)!}x^{n}+\sum_{k=1}^{n-1} \frac{ {}_{n} C _{k}}{(k-1)!} x^{k}\\ 
   &=&\displaystyle\frac{1}{n!}x^{n+1}+\displaystyle\frac{n+1}{(n-1)!}x^{n}+\sum_{k=1}^{n-1} \frac{ {}_{n} C _{k-1}+ {}_{n} C _{k}}{(k-1)!} x^{k}\\ 
  &=&\displaystyle\frac{1}{n!}x^{n+1}+\displaystyle\frac{n+1}{(n-1)!}x^{n}+\sum_{k=1}^{n-1} \frac{{}_{n+1} C _{k}}{(k-1)!} x^{k} \\
   &=&\sum_{k=1}^{n+1} \frac{{}_{n+1} C _{k}}{(k-1)!} x^{k}=\sum_{k=0}^{n} \frac{ {}_{n+1} C _{k+1}}{k!} x^{k+1}.
   \end{eqnarray*}
   Hence, we have that
   $$\left(\begin{array}{cc}
Q_{2n+1}(x) \vspace{4mm}\\
Q_{2n}(x)
\end{array}\right)=\left(
\begin{array}{cc}
\displaystyle\sum_{k=0}^{n} \displaystyle\frac{ {}_{n+1} C _{k+1}}{k!} x^{k+1} \vspace{4mm}\\
\displaystyle\sum_{k=0}^{n} \displaystyle\frac{ {}_{n} C _{k}}{k!} x^{k}
\end{array}
\right).$$
\ \ \ \ We next consider the following recurrence relations:
$$ \begin{cases}
P_{2n+1}(x)=xP_{2n}(x)+P_{2n-1}(x),\vspace{3mm}  \\ 
P_{2n}(x)=\displaystyle\frac{1}{n}P_{2n-1}(x)+P_{2n-2}(x),
\end{cases}$$
with initial conditions: $(P_{0}(x), P_{1}(x))=(0, 1)$. Similarly, we obtain that
$$
  \left(
    \begin{array}{cc}
    P_{2n+1}(x)  \vspace{9mm}\\
     P_{2n}(x)
    \end{array}
  \right) =
  M_{n}\left(
    \begin{array}{cc}
    1  \vspace{9mm}\\
     0
    \end{array}
  \right) = \left(
    \begin{array}{cc}
    (x+n)h_{n}(x)-nh_{n-1}(x)  \vspace{9mm}\\
      h_{n}(x)
    \end{array}
  \right).$$
 Since $h_{0}(x)=-r_{0}(x)$, $h_{1}(x)=-r_{1}(x)$, we see that $h_{n}(x)=-r_{n}(x)$. Then, we have that
 $$ (x+n)h_{n}(x)-nh_{n-1}(x)=n(r_{n-1}(x)-r_{n}(x))-xr_{n}(x)=(n+1)(r_{n}(x)-r_{n+1}(x))$$
 by the recurrence relation $nr_{n-1}(x)=(2n+1+x)r_{n}(x)-(n+1)r_{n+1}(x)$. Substituting (4.22) into (4.4), we obtain that
 $$R^{(n)}_{k}=-\sum_{l=1}^{n-k} \displaystyle\frac{ {}_{n} C _{l+k}}{(l+k)!}(-1)^{l-1}(l-1)!.$$
 Therefore, we have that
 $$-r_{n}(x)=\sum_{k=0}^{n-1} \sum_{l=0}^{n-k-1} \displaystyle\frac{ {}_{n} C _{l+k+1}}{(l+k+1)\cdots (l+1)}(-1)^{l}x^{k}$$
 and this implies
$$ (n+1)(r_{n}(x)-r_{n+1}(x))$$
\begin{eqnarray*}
&=&(n+1)\left(\displaystyle\sum_{k=0}^{n-1} \displaystyle\sum_{l=0}^{n-k-1}\displaystyle\frac{{}_{n} C _{l+k+1}}{(l+k+1)\cdots(l+1)} (-1)^{l}x^{k}-\displaystyle\sum_{k=0}^{n} \displaystyle\sum_{l=0}^{n-k}\displaystyle\frac{{}_{n+1} C _{l+k+1}}{(l+k+1)\cdots(l+1)} (-1)^{l}x^{k}\right)\\
&=&-\displaystyle\frac{1}{n!}x^{n}+(n+1)\displaystyle\sum_{k=0}^{n-1} \left(\displaystyle\frac{(-1)^{n-k-1}}{(n+1)n\cdots (n-k+1)}-\displaystyle\sum_{l=0}^{n-k-1} \displaystyle\frac{{}_{n} C _{l+k}}{(l+k+1)\cdots(l+1)} (-1)^{l-1}\right)x^{k}\\
&=&-\displaystyle\frac{1}{n!}x^{n}-(n+1)\displaystyle\sum_{k=0}^{n-1} \displaystyle\sum_{l=0}^{n-k} \displaystyle\frac{{}_{n} C _{l+k}}{(l+k+1)\cdots(l+1)} (-1)^{l-1}x^{k}\\
&=&-\displaystyle\frac{1}{n!}x^{n}-\displaystyle\sum_{k=0}^{n-1} \displaystyle\sum_{l=0}^{n-k} (l+k+1)\displaystyle\frac{{}_{n+1} C _{l+k+1}}{(l+k+1)\cdots(l+1)} (-1)^{l-1}x^{k}\\
&=&\displaystyle\sum_{k=0}^{n} \displaystyle\sum_{l=0}^{n-k} \displaystyle\frac{{}_{n+1} C _{l+k+1}}{(l+k)\cdots(l+1)} (-1)^{l}x^{k}.
\end{eqnarray*}
Therefore, we see that 
$$(n+1)(r_{n}(x)-r_{n+1}(x))=\displaystyle\sum_{k=0}^{n} \displaystyle\sum_{l=0}^{n-k} \displaystyle\frac{{}_{n+1} C _{l+k+1}}{(l+k)\cdots(l+1)} (-1)^{l}x^{k}.$$
Hence, we obtain that 
$$
  \left(
    \begin{array}{cc}
    P_{2n+1}(x)  \vspace{9mm}\\
     P_{2n}(x)
    \end{array}
  \right) =
  \left(
    \begin{array}{cc}
     \displaystyle\sum_{k=0}^{n} \displaystyle\sum_{l=0}^{n-k} \displaystyle\frac{{}_{n+1} C _{l+k+1}}{(l+k)\cdots(l+1)} (-1)^{l}x^{k}\vspace{9mm}\\
     \displaystyle\sum_{k=0}^{n-1} \displaystyle\sum_{l=0}^{n-k-1} \displaystyle\frac{ {}_{n} C _{l+k+1}}{(l+k)\cdots (l+1)}(-1)^{l}x^{k}
    \end{array}
  \right).$$
  These complete a proof of Theorem 5.1.
\end{proof}
  
\vspace{5mm}
{\bf Address:}  Naoki Murabayashi: Department of Mathematics, Faculty of Engineering Science, Kansai \\
\hspace{10mm}University, 3-3-35, Yamate-cho, Suita-shi, Osaka, 564-8680, Japan.\\
{\bf E-mail:} murabaya@kansai-u.ac.jp \\ \\
{\bf Address:}  Hayato Yoshida: Mathematics, Integrated Science and Engineering Major, Graduate school of Science and\\
\hspace{10mm}Engineering, Kansai University, 3-3-35, Yamate-cho, Suita-shi, Osaka, 564-8680, Japan.\\ 
{\bf E-mail:} k321930@kansai-u.ac.jp

\begin{thebibliography}{9}
\bibitem{1} Joe P, Buhler, Benedict H. Gross and Don B. Zagier, {\it On the Conjecture of Birch and Swinnerton-Dyer for an Elliptic Curve of Rank 3}, Mathematics of computation Vol 44. Number 170 April 1985. Pages 473-481
  \bibitem{2} John H. Coates, {\it Congruent Number Problem}, Pure and Applied Mathematics Quarterly Vol 1, 2005. Pages 14-27
  \bibitem{3} Shin Hitotsumatsu, {\it Introduction to Special Functions} (in Japanese), Morikita Publishing Co.,Ltd. (1999)
  \bibitem{4} Peter Henrici, {\it The Quotient-Difference Algorithm}, Applied Mathematics Series Vol 49, 1958. Pages 23-46
\end{thebibliography}
\end{document}